\documentclass[a4paper,cjk,11pt]{article}
\usepackage{bm}           
\usepackage{amsthm}       
\usepackage{amsfonts}     
\usepackage{color}        
\usepackage{graphicx}     
\usepackage{float}        
\usepackage{amsmath}      
\usepackage{amssymb}
\usepackage[mathscr]{euscript}
\usepackage{dsfont}

\usepackage[paper=a4paper,top=2cm,left=2cm,right=2cm,
    foot=1cm,bottom=3cm]{geometry}

\begin{document}

\title{Quaternion Gaussian Probes for Tensor Frobenius-Norm Estimation}
\author{%
  Yannan Chen\footnote{School of Mathematical Sciences, South China Normal University, Guangzhou 510631, China. (\texttt{email:~ynchen@scnu.edu.cn}) This author was supported by the National Natural Science Foundation of China (12671423, 12471351).} \and
  Hailin Sun\footnote{Key Laboratory of NSLSCS, Ministry of Education; Jiangsu International Joint Laboratory of BDMCA; School of Mathematical Sciences, Nanjing Normal University, Nanjing 210023, China.  (\texttt{email:~hlsun@njnu.edu.cn})} \and
  Min Xi\footnote{School of Mathematics and Statistics, Guangdong University of Foreign Studies, Guangzhou, 510006, China. (\texttt{email:~mxi@gdufs.edu.cn}) This author was supported by the Guangdong Province Higher Education Foundation (No. 2021ZDZX1071). }}
\date{\today}
\maketitle

\newcommand{\REAL}{\mathds{R}}
\newcommand{\COPX}{\mathds{C}}
\newcommand{\QUAN}{\mathds{H}}
\newcommand{\av}{\mathbf{a}}
\newcommand{\bv}{\mathbf{b}}
\newcommand{\cv}{\mathbf{c}}
\newcommand{\dv}{\mathbf{d}}
\newcommand{\qv}{\mathbf{q}}
\newcommand{\uv}{\mathbf{u}}
\newcommand{\vv}{\mathbf{v}}
\newcommand{\wv}{\mathbf{w}}
\newcommand{\xv}{\mathbf{x}}
\newcommand{\yv}{\mathbf{y}}
\newcommand{\zv}{\mathbf{z}}
\newcommand{\alphav}{{\bm\alpha}}
\newcommand{\Am}{\mathbf{A}}
\newcommand{\Bm}{\mathbf{B}}
\newcommand{\Cm}{\mathbf{C}}
\newcommand{\Id}{\mathbf{I}}
\newcommand{\Qm}{\mathbf{Q}}
\newcommand{\Sm}{\mathbf{S}}
\newcommand{\Um}{\mathbf{U}}
\newcommand{\Vm}{\mathbf{V}}
\newcommand{\Wm}{\mathbf{W}}
\newcommand{\Xm}{\mathbf{X}}
\newcommand{\Sigmam}{\mathbf{\Sigma}}
\newcommand{\At}{\mathcal{A}}
\newcommand{\Bt}{\mathcal{B}}
\newcommand{\Ct}{\mathcal{C}}
\newcommand{\St}{\mathcal{S}}

\newcommand{\vr}[1]{\vec{\boldsymbol{#1}}}
\newcommand{\qn}[1]{\boldsymbol{#1}}

\newcommand{\rank}{\mathrm{rank}}
\newcommand{\Prob}{\mathbb{P}}
\newcommand{\Pro}[1]{\mathbb{P}\left\{#1\right\}}
\newcommand{\Mean}{\mathbb{E}}
\newcommand{\Expe}{\mathbb{E}}
\newcommand{\Var}{\mathrm{Var}}
\newcommand{\diff}{\mathrm{d}}
\newcommand{\ii}{\mathbf{i}}
\newcommand{\jj}{\mathbf{j}}
\newcommand{\kk}{\mathbf{k}}
\newtheorem{Thm}{Theorem}[section]
\newtheorem{Def}[Thm]{Definition}
\newtheorem{Prop}[Thm]{Proposition}
\newtheorem{Lem}[Thm]{Lemma}
\newtheorem{Cor}[Thm]{Corollary}
\newtheorem{Exe}[Thm]{Exercise}
\newcounter{algorithm}
\renewcommand{\thealgorithm}{\arabic{algorithm}}

\numberwithin{equation}{section}
\newcommand{\argmin}{\mathop{\arg\,\min}}

\begin{abstract}
Large-scale tensors arising in science and engineering are often too large to form or store explicitly. Their multilinear contractions, however, can frequently be evaluated efficiently, which makes randomized Frobenius-norm estimation a practical alternative to exact computation. 
In this paper, we study estimation of the squared Frobenius norm of a real order-$d$ tensor from such contractions using independent real, complex, or quaternion Gaussian probes. For a rank-one tensor, one normalized sample is a product of $d-1$ independent Gamma variables. This gives exact relative-variance factors $3^{d-1}-1$, $2^{d-1}-1$, and $(3/2)^{d-1}-1$, respectively. Under the same budget of underlying real Gaussian vectors, the quaternion probe has smaller rank-one variance than four independent real probes for $d\ge3$; a real-channel representation identifies the gain as modewise pooling. For orthogonal CP tensors, the exact relative variance equals the corresponding rank-one factor divided by the participation-ratio effective rank and the number of samples. For arbitrary real tensors, we prove sharp uniform moment bounds, attained at rank one, and obtain Chebyshev guarantees. A classical block-median construction then gives two-sided exponential confidence for the modified estimator. Numerical examples test the rank-one and effective-rank formulas. An implicit infinite-series kernel tensor illustrates contraction-based norm evaluation; EEG maps provide a qualitative spatial interpolation illustration.

\medskip\noindent\textbf{Key words.} tensor Frobenius norm, randomized norm estimation, quaternion Gaussian probe, Chebyshev bound, orthogonal CP tensor, effective rank

\medskip\noindent\textbf{MSC (2020).} 15A69, 60B20, 65F35
\end{abstract}

\newpage

\section{Introduction}\label{intro}

The Frobenius norm is a natural measure of the size of a tensor and of
the error in its approximation. For an order-$d$ tensor
$\At\in\REAL^{n_1\times\cdots\times n_d}$, however, storing all
$\prod_{j=1}^d n_j$ entries may be infeasible even when the tensor can
be accessed through its factors or multilinear contractions
\cite{KoldaBader09,TensBK}. The need to estimate its norm without
materializing it arises in several concrete settings. 

First, suppose that an adaptive low-rank method produces an
approximation $\widetilde{\At}_r$. Its stopping decision may require
checking $\|\At-\widetilde{\At}_r\|_F\leq\varepsilon\|\At\|_F$ with $\varepsilon\in(0,1)$,
although the residual $\mathcal R_r=\At-\widetilde{\At}_r$ is too
large to assemble. Canonical polyadic (CP), Tucker, and tensor-train representations make structured
contractions available, and adaptive tensor methods explicitly require
error control \cite{BK08,Oseledets11,BachmayrDahmen15,CheWeiYan2026,ChangCuiHanWang2026}. A direct method for the triple decomposition provides another structured representation of third-order tensors \cite{ChenQi2026}. Random probes
can therefore provide a posterior norm estimate whenever the residual
contractions are computable; a probabilistic stopping certificate also
needs an appropriate confidence bound.

Second, empirical moment tensors in latent-variable learning can be
formed implicitly from a stream $\av_1,\ldots,\av_T\in\REAL^n$:
\[
 \mathcal M_{d,T}=\frac{1}{T}\sum_{t=1}^T\av_t^{\circ d}.
\]
Moment-based tensor methods are well established
\cite{AnandkumarEtAl14}, but $\mathcal M_{d,T}$ has $n^d$ entries.
For fixed modewise probes, each multilinear contraction can instead
be updated from the incoming rank-one term. Monitoring
$\|\mathcal M_{d,T}\|_F$ or the norm of a change in the moment tensor
is one possible use of such sketches; its statistical interpretation
depends on the learning problem.

Third, the order-$d$ derivative $\nabla^d f(\xv)$ of a smooth scalar
function $f(\xv)$ is usually too large to store. Repeated directional
differentiation and automatic differentiation can evaluate its
contraction with $d-1$ prescribed directions, leaving a vector in the
first mode \cite{BaydinEtAl18}. Estimating its Frobenius norm would
then give a matrix-free diagnostic of high-order curvature. 

These examples motivate the same oracle question: given access to
\[
 (\cdots(\At\times_d\xv_d)\cdots)\times_2\xv_2, 
\]
how accurately can one estimate $\|\At\|_F^2$ from a modest number
of independent contractions?

For matrices, this question is related to stochastic trace estimation:
with $\Bm=\Am^T\Am$, one has $\|\Am\|_F^2=\operatorname{tr}\Bm$ and
$\mathbb E\|\Am\xv\|_2^2=\operatorname{tr}\Bm$
for an isotropic random vector $\xv$.
Hutchinson's estimator and subsequent high-probability analyses
developed this matrix-vector framework
\cite{H1990,AT2011,CK22}. For higher-order tensors, products of
independent modewise probes preserve access to contractions, but their
moments depend strongly on the tensor order. Bujanovi\'c and
Kressner studied norm and trace estimation with random rank-one
vectors \cite{BK21}; Meyer and Avron derived exact variance formulas
for Kronecker trace estimation, including real and complex Gaussian
probes \cite{MA26}. Related tensor
Bernstein inequalities concern random tensors under the Einstein
product \cite{LQT20}, while general Gaussian-chaos estimates describe
the tails of polynomial functions of Gaussian variables \cite{La06}. The choice of random probes also affects low-rank approximation \cite{SaibabaMiedlar2025}.

For real and complex probes, one sample of our estimator is
a Kronecker-probe trace sample for the positive
semidefinite matrix $\Am_{(1)}^T\Am_{(1)}$. Our focus is the
corresponding quaternion estimator for a real tensor, with a fixed
order of contractions. 
For $k$ independent probe tuples, we study the sample mean
\[
 \mathrm{Est}_k^G
 :=\frac1k\sum_{i=1}^k
 \left\|(\cdots(\At\times_d\xv_d^i)\cdots)\times_2\xv_2^i\right\|_2^2,
 \qquad \xv_j^i\in\mathds F^{n_j}, ~j=2,\dots,d,
 \qquad \mathds F\in\{\REAL,\COPX,\QUAN\}.
\]
The probes are independent across modes and trials and are scaled so that $\mathbb E(\xv_j^i(\xv_j^i)^H)=\Id_{n_j}$. The real tensor coefficients and the fixed order of successive contractions make this one estimator definition for all three fields. The main results are as follows.
\begin{enumerate}
\item \textit{Exact rank-one law and a comparison at equal real Gaussian budget.}
We show that the estimator is unbiased and strongly consistent. For a nonzero rank-one tensor, a single sample divided by $\|\At\|_F^2$ is a product of $d-1$ independent normalized chi-square variables. Its exact moments yield the sharp real, complex, and quaternion relative-variance factors
\[
 3^{d-1}-1,\qquad 2^{d-1}-1,\qquad (3/2)^{d-1}-1.
\]
One quaternion probe uses four real Gaussian vectors per contracted mode. Its rank-one variance equals that of four independent real samples for $d=2$ and is strictly smaller for $d\ge3$. The gain arises from pooling the four channels within each mode.

\item \textit{Orthogonal CP structure and effective rank.}
For $\At=\sum_{r=1}^R\lambda_r\uv_{1r}\circ\cdots\circ\uv_{dr}$ with orthonormal factors in every mode, define
$r_{\mathrm{eff}}(\At)=(\sum_r\lambda_r^2)^2/\sum_r\lambda_r^4$.
We prove the exact identity
\[
 \frac{\operatorname{Var}(\mathrm{Est}_k^G)}{\|\At\|_F^4}
 =\frac{\varrho(2,\mathds F)^{d-1}-1}
        {k\,r_{\mathrm{eff}}(\At)},
\]
where $\varrho(2,\mathds F)$ equals $3$, $2$, and $3/2$ over $\REAL$, $\COPX$, and $\QUAN$, respectively.
Thus the energy distribution among orthogonal components determines the reduction beyond the rank-one variance. The effective rank can also be computed from any mode unfolding.

\item \textit{General-tensor moments and confidence.}
Without a rank or orthogonality assumption, we prove, for each integer $p\ge1$,
$\mathbb EY^p\le\varrho(p,\mathds F)^{d-1}\|\At\|_F^{2p}$.
Rank-one equality makes this a sharp worst-case bound. It yields a variance and a two-sided Chebyshev estimate for the ordinary mean. Because higher-order rank-one Gaussian products have a heavy upper tail, we use the classical block-median principle \cite{BubeckCesaBianchiLugosi2013} to turn the finite variance bound into a two-sided exponential-confidence guarantee for a robust estimator. 

\item \textit{Numerical checks and an implicit-series application.}
Rank-one simulations compare the three probes at equal real-randomness budget; a separate experiment uses genuinely orthogonal CP factors to test the $1/r_{\mathrm{eff}}$ law. For a kernel coefficient tensor given by an infinite sum of rank-one terms \cite{XY19,CFLY24}, we evaluate contractions without materializing its entries. Scalp-potential maps illustrate a related spatial interpolation workflow qualitatively. 
\end{enumerate}


The remainder of this paper is organized as follows. Section~\ref{Sec:EstMethod} defines the estimator and proves consistency. Section~\ref{Sec:Concentration} gives the rank-one and equal-budget results. Section~\ref{sec:ocp-main} treats orthogonal CP tensors, Section~\ref{sec:general-main} gives the uniform moment and concentration bounds, and Section~\ref{Sec:NumExp} studies the implicit-series example and the qualitative interpolation maps. Section~\ref{Sec:Conclude} concludes the paper.

\section{Quaternion-based Gaussian estimator}\label{Sec:EstMethod}

We begin with the quaternion algebra $\QUAN$ introduced by William Rowan Hamilton \cite{Hamilton1844}. A quaternion $\qn{q}=(q_0,\vr{q}) \in\QUAN$, with $\vr{q}=(q_1,q_2,q_3)^T$, is written as
\begin{equation*}
	\qn{q} = q_0 +q_1\mathbf{i}+q_2\mathbf{j}+q_3\mathbf{k} \qquad\text{ with }~
	\mathbf{i}^2=\mathbf{j}^2=\mathbf{k}^2=\mathbf{i}\mathbf{j}\mathbf{k}=-1,
\end{equation*}
where $q_0$ is the real part, $q_1\mathbf{i}$, $q_2\mathbf{j}$ and $q_3\mathbf{k}$ are imaginary parts, and $(1,\mathbf{i},\mathbf{j},\mathbf{k})$ is a basis. Quaternions follow the normal rules of algebra except for the commutative law for multiplication (i.e., $\qn{p}\qn{q}\ne \qn{q}\qn{p}$). Indeed, the multiplication of two quaternions $\qn{p}=(p_0,\vr{p})$ and $\qn{q}=(q_0,\vr{q})$ produces
\begin{equation}\label{quatMult}
  \qn{p}\qn{q} = (p_0q_0-\vr{p}\cdot\vr{q}, p_0\vr{q}+q_0\vr{p}+\vr{p}\times\vr{q}),
\end{equation}
where ``$\cdot$'' and ``$\times$'' stand for the scalar and vector products of vectors, respectively. Owing to $\vr{p}\times\vr{q}=-\vr{q}\times\vr{p}$, the commutative law for multiplication of quaternions is not applicable. Quaternion multiplication is associative and has a unit $(1,\vr{0})$. The conjugate of a quaternion $\qn{q}$ is $\overline{\qn{q}}:=q_0-q_1\mathbf{i}-q_2\mathbf{j}-q_3\mathbf{k}$. The modulus of a quaternion is defined as $|\qn{q}|:=\sqrt{\qn{q}\overline{\qn{q}}}$. 

We now turn to randomized estimation. The key idea behind randomized Frobenius-norm estimation for matrices is to probe the matrix with random vectors, thereby projecting it onto random directions, and then estimate the norm by averaging the squared norms of the resulting projections. Analogously, for a high-order tensor $$\At \in \mathbb{R}^{n_1\times n_2\times \cdots \times n_d},$$ we seek a random projection mechanism that retains its multilinear structure while enabling efficient norm estimation.

Let us review Gaussian random vectors with real and complex values \cite{MA26}. Let $\qv_0$ and $\qv_1$ be independent real random vectors, each following the standard multivariate normal distribution $\mathsf{N}(\mathbf{0},\Id_n)$. Define the following random vectors.
\begin{itemize}
  \item Common real-valued random vectors
  \begin{equation*}
    \xv^{\REAL} = \qv_0 \sim \mathsf{N}(\mathbf{0},\Id_n).
  \end{equation*}
  \item Complex-valued random vectors
  \begin{equation*}
    \xv^{\COPX} = \frac{1}{\sqrt{2}}\qv_0+\frac{\ii}{\sqrt{2}}\qv_1 \sim \mathsf{N}(\mathbf{0},\Id_n).
  \end{equation*}
\end{itemize}
Similarly, let $\qv_0, \qv_1, \qv_2, \qv_3\sim \mathsf{N}(\mathbf{0},\Id_n)$ be independent real random vectors. We define
\begin{itemize}
  \item Quaternion-valued random vectors
  \begin{equation}\label{quat-rv}
    \xv^{\QUAN} = \frac{1}{2}\qv_0+\frac{\ii}{2}\qv_1+\frac{\jj}{2}\qv_2+\frac{\kk}{2}\qv_3.
  \end{equation}
\end{itemize}

The following lemma verifies the normalization of the quaternion Gaussian vector $\xv^{\QUAN}$.

\begin{Lem}\label{Lem:3.1}
  The quaternion Gaussian vector $\xv^{\QUAN}$ defined in
  \eqref{quat-rv} is zero-mean and isotropic; namely,
  \begin{equation*}
    \Mean(\xv^{\QUAN})=\mathbf{0},
    \qquad
    \mathbf{\Sigma}(\xv^{\QUAN})
    :=\Mean\!\left[
      \xv^{\QUAN}(\xv^{\QUAN})^H
    \right]
    =\Id_n.
  \end{equation*}
\end{Lem}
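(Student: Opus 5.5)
The plan is a direct computation from the definition \eqref{quat-rv}, using linearity of expectation and the quaternion multiplication rules.

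For the mean, write $\xv^{\QUAN}=\frac12(\qv_0+\ii\qv_1+\jj\qv_2+\kk\qv_3)$. Expectation acts componentwise on the four real coefficient vectors, and each $\qv_s$ is centered, so $\Mean(\xv^{\QUAN})=\mathbf 0$ follows at once.

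For the covariance, note that $\ii,\jj,\kk$ commute with real scalars. Hence the conjugate transpose is
\[
(\xv^{\QUAN})^H=\tfrac12(\qv_0^T-\ii\qv_1^T-\jj\qv_2^T-\kk\qv_3^T).
\]
Set $e_0=1$, $e_1=\ii$, $e_2=\jj$, $e_3=\kk$. Expanding gives
\[
\xv^{\QUAN}(\xv^{\QUAN})^H=\tfrac14\sum_{s,t=0}^3 e_s\overline{e_t}\,\qv_s\qv_t^T .
\]
This is a sum of real matrices multiplied by fixed quaternion units. The expansion is legitimate because the entries of $\qv_s\qv_t^T$ are real and commute with the units.

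Now take expectations term by term. For $s\ne t$, independence and zero mean give $\Mean(\qv_s\qv_t^T)=\mathbf 0$, so every cross term vanishes. This disposes of the potential obstacle that $e_s\overline{e_t}$, for instance $\ii\overline{\jj}=-\kk$, would contribute imaginary parts. For $s=t$, we have $e_s\overline{e_s}=|e_s|^2=1$ and $\Mean(\qv_s\qv_s^T)=\Id_n$. Summing the four diagonal terms gives $\frac14\cdot4\,\Id_n=\Id_n$.

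The only point needing care is the noncommutativity. It must be checked that the order of factors in $e_s\qv_s(\overline{e_t}\qv_t^T)$ can be rearranged to $e_s\overline{e_t}\,\qv_s\qv_t^T$. This holds because the vector entries are real. No earlier results are needed beyond the standard Gaussian moments $\Mean\qv_s=\mathbf 0$ and $\Mean\qv_s\qv_s^T=\Id_n$, together with the independence of $\qv_0,\dots,\qv_3$.
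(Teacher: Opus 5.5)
Your proof is correct and is essentially the paper's argument. Both compute the second moment directly from the four independent centered real components: you expand over the quaternion units, so the cross-component terms vanish by independence of $\qv_s$ and $\qv_t$, while the paper works entrywise, with $\Mean(x_s\overline{x_t})=0$ for distinct coordinates and $\Mean|x_s|^2=\tfrac14\sum_{\ell}\Mean q_{\ell,s}^2=1$.

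The paper's proof also records a fact that goes beyond this statement. Viewing $\xv^{\QUAN}$ as a scaled isotropic Gaussian on $\REAL^{4n}$, its law is invariant under quaternion unitary maps. That invariance is what Theorem~\ref{thm:rank-one-exact} and Lemma~\ref{Lem:NR1} later invoke, so your computation alone would not supply it.
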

\begin{proof}
The four real components in \eqref{quat-rv} have mean zero and
are independent across coordinates. Thus
$\mathbb E(x_s\overline{x_t})=0$ for $s\ne t$, while
$\mathbb E|x_s|^2=(1/4)\sum_{\ell=0}^{3}\mathbb E(q_{\ell,s}^2)=1$.
This proves the covariance identity. Moreover, a quaternion unitary
map preserves the Euclidean norm on the underlying real space
$\REAL^{4n}$ and acts there as a real orthogonal map. The vector in
\eqref{quat-rv} is an isotropic real Gaussian on that space, scaled
by $1/2$, so its law is invariant under such maps. In particular,
its coordinates in any quaternion orthonormal basis are independent
standard quaternion Gaussian variables.
\end{proof}

This is the standard covariance normalization for a quaternion Gaussian
vector constructed from four independent real Gaussian components; see
\cite{TookMandic2011}. Accordingly, we call $\xv^{\QUAN}$ a standard quaternion Gaussian vector and write $\xv^{\QUAN}\sim\mathsf N(\mathbf0,\Id_n)$.

Next, we consider projecting a high-order tensor $\At$ along its second to $d$th modes using independent standard Gaussian random vectors
\begin{equation*}
  \xv_2\sim \mathsf{N}(\mathbf{0},\Id_{n_2}),\quad\dots,\quad\xv_d\sim \mathsf{N}(\mathbf{0},\Id_{n_d}),
\end{equation*}
where the probes $\xv_2,\dots,\xv_d$ are real-, complex-, or quaternion-valued. Successive modewise contractions produce the random vector $(\cdots(\At\times_d\xv_d)\cdots)\times_2\xv_2\in\mathds F^{n_1}$ over $\mathds F\in\{\REAL,\COPX,\QUAN\}$, whose squared norm
\begin{equation*}
  \|(\cdots(\At \times_d\xv_d) \cdots )\times_2\xv_2\|_2^2 = \|\Am_{(1)}(\xv_d\otimes \cdots\otimes \xv_2)\|_2^2
\end{equation*}
serves as a Monte Carlo sample for the squared tensor Frobenius norm $\|\At\|_F^2$, where $\Am_{(1)}\in\REAL^{n_1\times n_2 \cdots n_d}$ denotes the mode-$1$ unfolding matrix of $\At$, and ``$\otimes$'' stands for the Kronecker product.
For quaternion probes, the product convention in this identity is
fixed by the successive contractions: its entry indexed by
$(i_d,\ldots,i_2)$ is $(x_d)_{i_d}\cdots(x_2)_{i_2}$, in precisely this order.

Given a positive integer $k$, we generate $d-1$ independent random matrices
\begin{equation*}
  \Xm_2=(\xv_2^1,\dots,\xv_2^k),\quad \dots, \quad \Xm_d=(\xv_d^1,\dots,\xv_d^k),
\end{equation*}
whose entries are mutually independent standard Gaussian variables over $\mathds F\in\{\REAL,\COPX,\QUAN\}$. Averaging $k\ge1$ independent trials gives the estimator
\begin{equation}\label{tensor-estimator}
  \mathrm{Est}^G_k := \frac{1}{k} \sum_{i=1}^k \left\| (\cdots(\At \times_d \xv_d^i) \cdots) \times_2 \xv_2^i \right\|_2^2
  = \frac{1}{k}\left\|\Am_{(1)}(\Xm_d\odot\cdots\odot\Xm_2)\right\|_F^2,
\end{equation}
where $\mathrm{Est}^G_k$ approximates $\|\mathcal{A}\|_F^2$ and ``$\odot$'' stands for the Khatri-Rao (columnwise Kronecker) product. The purpose of performing multiple Monte Carlo samplings is to reduce the variance of the estimator and improve the accuracy.

The estimator in \eqref{tensor-estimator} does not require the entries of $\At$ to be explicitly available. It only requires the multilinear contractions
\begin{equation*}
  (\cdots(\At\times_d\xv_d^i)\cdots)\times_2\xv_2^i,
  \qquad i=1,\ldots,k.
\end{equation*}
Consequently, it applies to the implicit residual, streaming moment, and derivative tensors discussed in the introduction whenever these contractions can be evaluated through the underlying representation or computational oracle. The $k$ samples can be computed independently and hence in parallel. For tensors represented as additive streams, the corresponding sketches can also be updated term by term without storing the accumulated tensor. The sample size $k$ therefore provides a direct tradeoff between computational cost and estimation accuracy.

\begin{Prop}[Unbiasedness]\label{Thm:unbiased}
  Let $\At\in\REAL^{n_1\times\cdots\times n_d}$, and let $\mathrm{Est}_k^G$ be the estimator defined in \eqref{tensor-estimator}. For each sample $i=1,\ldots,k$ and each mode $j=2,\ldots,d$, let $\xv_j^i\in\mathds F^{n_j}$ be a standard Gaussian vector over $\mathds F\in\{\REAL,\COPX,\QUAN\}$. Assume that these vectors are mutually independent and normalized so that $\mathbb E\!\left(\xv_j^i(\xv_j^i)^H\right)=\Id_{n_j}$. Then $\mathrm{Est}_k^G$ is an unbiased estimator of the squared Frobenius norm of $\At$; that is,
\begin{equation*}
 \mathbb E(\mathrm{Est}_k^G)=\|\At\|_F^2.
\end{equation*}
\end{Prop}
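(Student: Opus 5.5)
By linearity of expectation it suffices to treat a single sample $Y=\|(\cdots(\At\times_d\xv_d)\cdots)\times_2\xv_2\|_2^2$ and show $\mathbb E Y=\|\At\|_F^2$. The plan is to write $Y$ coordinatewise and expand the squared modulus as a double sum over multi-indices, then use independence across modes and the second-moment normalization from Lemma~\ref{Lem:3.1}. Let $\mathbf i=(i_2,\dots,i_d)$ and $\mathbf i'=(i_2',\dots,i_d')$ be multi-indices. The $i_1$-th entry of the contracted vector is
\[
y_{i_1}=\sum_{\mathbf i} a_{i_1 i_2\cdots i_d}\,(x_d)_{i_d}\cdots(x_2)_{i_2},
\]
with the quaternion product taken in the fixed order stated in the paper. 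The tensor entries are real, so they commute with everything. Therefore
\[
|y_{i_1}|^2=y_{i_1}\overline{y_{i_1}}=\sum_{\mathbf i,\mathbf i'} a_{i_1\mathbf i}a_{i_1\mathbf i'}\,(x_d)_{i_d}\cdots(x_2)_{i_2}\,\overline{(x_2)_{i_2'}}\cdots\overline{(x_d)_{i_d'}},
\]
using $\overline{pq}=\bar q\,\bar p$. This expression is a real number, because it is the sum of the moduli squared.

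The key step, and the only real obstacle, is computing the expectation of the nested product
\[
(x_d)_{i_d}\cdots(x_2)_{i_2}\overline{(x_2)_{i_2'}}\cdots\overline{(x_d)_{i_d'}}
\]
when the factors do not commute. I will peel it from the inside out, conditioning on $\xv_3,\dots,\xv_d$. The innermost factor $(x_2)_{i_2}\overline{(x_2)_{i_2'}}$ is independent of the others. By Lemma~\ref{Lem:3.1} its expectation is $\delta_{i_2i_2'}$, which is a real scalar. Since the outer factors are fixed under the conditioning and the expectation is $\REAL$-linear, we get
\[
\mathbb E[\,p\,Z\,q\mid p,q\,]=p\,\mathbb E[Z]\,q
\]
for quaternion $p,q$. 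This holds because multiplication on either side by a fixed quaternion is a real-linear map of $\REAL^4$. Hence the inner part collapses to $\delta_{i_2i_2'}$. The collapsed scalar is real, so it commutes and can be pulled out. The remaining product has the same nested form with one fewer mode. Induction on the number of modes then gives
\[
\mathbb E[\cdots]=\prod_{j=2}^d\delta_{i_ji_j'}.
\]
The real and complex cases are special cases of this argument, and there commutativity makes it immediate.

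Substituting back, only the diagonal terms $\mathbf i=\mathbf i'$ survive. This gives $\mathbb E|y_{i_1}|^2=\sum_{\mathbf i}a_{i_1\mathbf i}^2$, and summing over $i_1$ gives $\mathbb EY=\|\At\|_F^2$. Integrability is not an issue, because every term is a polynomial in Gaussian variables. Averaging over the $k$ identically distributed samples then yields $\mathbb E(\mathrm{Est}_k^G)=\|\At\|_F^2$.
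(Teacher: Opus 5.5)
Your proof is correct and follows essentially the same route as the paper: both integrate out $\xv_2$ first while conditioning on $\xv_3,\dots,\xv_d$, then move outward mode by mode, and both rely on $\mathbb E(x_s\overline{x_t})=\delta_{st}$ being real, so no quaternion factors ever need to be commuted. The only difference is bookkeeping. The paper packages each step as the identity $\mathbb E_{\xv}\|\mathcal B\times\xv\|_F^2=\|\mathcal B\|_F^2$ for an intermediate quaternion-valued tensor $\mathcal B$. You instead expand into multi-indices and prove $\mathbb E[(x_d)_{i_d}\cdots(x_2)_{i_2}\overline{(x_2)_{i_2'}}\cdots\overline{(x_d)_{i_d'}}]=\prod_j\delta_{i_ji_j'}$ directly, which makes explicit the real-linearity step $\mathbb E[pZq\mid p,q]=p\,\mathbb E[Z]\,q$ that the paper uses only implicitly.
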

\begin{proof}
Because the $k$ Monte Carlo samples are identically distributed, it is enough to consider one of them. We suppress the sample index and write
\begin{equation*}
 Y:=\left\|(\cdots(\At\times_d\xv_d)\cdots)\times_2\xv_2\right\|_2^2.
\end{equation*}
We first record the effect of a single random contraction. Let $\mathcal B$ be any tensor over $\mathds F$ that is independent of an isotropic vector $\xv\in\mathds F^n$, and suppose that the contraction is performed along a mode of length $n$. If $\bv_\alpha^T$ denotes a row of the corresponding unfolding of $\mathcal B$, then
\begin{align*}
 \mathbb E_{\xv}\|\mathcal B\times\xv\|_F^2
 &=\sum_\alpha
   \mathbb E_{\xv}\left|\bv_\alpha^T\xv\right|^2\\
 &=\sum_\alpha\sum_{s=1}^{n}\sum_{t=1}^{n}
   b_{\alpha s}\,\mathbb E(x_s\overline{x_t})\,
   \overline{b_{\alpha t}}\\
 &=\sum_\alpha\sum_{s=1}^{n}|b_{\alpha s}|^2
 =\|\mathcal B\|_F^2.
\end{align*}
Here $\mathbb E(x_s\overline{x_t})=\delta_{st}$ follows from isotropy. The quantity $\delta_{st}$ is real, so the calculation remains valid in the quaternion case despite the noncommutativity of quaternion multiplication. Thus, contraction with one independent isotropic probe preserves the squared Frobenius norm in expectation.

Apply the identity in the reverse order of the displayed
contractions. First condition on $\xv_3,\ldots,\xv_d$ and integrate
the last-applied probe $\xv_2$; the tensor
$(\cdots(\At\times_d\xv_d)\cdots)\times_3\xv_3$ is then fixed and
independent of $\xv_2$. Its expected squared norm after contraction
with $\xv_2$ is its squared Frobenius norm. Next integrate $\xv_3$,
then $\xv_4$, and so on through $\xv_d$. At every step the remaining
tensor is independent of the probe being integrated. This order also
preserves the stated quaternion multiplication order, without
commuting any quaternion factors. Thus
\begin{equation*}
 \mathbb EY=\|\At\|_F^2.
\end{equation*}
Finally, $\mathrm{Est}_k^G$ is the average of $k$ copies of $Y$, and hence
\begin{equation*}
 \mathbb E(\mathrm{Est}_k^G)
 =\frac1k\sum_{i=1}^k\mathbb EY_i
 =\|\At\|_F^2. \qedhere
\end{equation*}
\end{proof}

Unbiasedness is a standard consequence of isotropy. Related results for real- and complex-valued Kronecker probes appear in \cite{BK21,MA26}, while the quaternion case follows from the same argument using Lemma~\ref{Lem:3.1}. The proposition records a basic property of the unified estimator.

\begin{Cor}[Almost sure consistency]\label{cor:strong-consistency} 
  By the strong law of large numbers, $\mathrm{Est}_k^G\to\|\At\|_F^2$ almost surely as $k\to\infty$, for every fixed real tensor $\At$ and each choice of $\mathds F$.
\end{Cor}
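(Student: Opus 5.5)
The plan is to apply Kolmogorov's strong law of large numbers to the i.i.d.\ sequence of single samples
\[
Y_i:=\left\|(\cdots(\At\times_d\xv_d^i)\cdots)\times_2\xv_2^i\right\|_2^2,\qquad i=1,2,\ldots
\]
We first note that the probe tuples $(\xv_2^i,\ldots,\xv_d^i)$ are mutually independent across $i$ and identically distributed. Each $Y_i$ is the same fixed measurable map applied to the $i$th tuple, namely a polynomial in the underlying real Gaussian components. Hence $\{Y_i\}$ is an i.i.d.\ sequence of nonnegative real random variables. In the quaternion case, the fixed multiplication order set by the successive contractions makes this map well defined, so noncommutativity causes no difficulty.

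The only hypothesis of the strong law to check is integrability, $\mathbb E|Y_1|<\infty$. Since $Y_1\ge0$, this is just $\mathbb EY_1<\infty$. Proposition~\ref{Thm:unbiased} gives this directly: $\mathbb EY_1=\|\At\|_F^2<\infty$ for every $\mathds F\in\{\REAL,\COPX,\QUAN\}$. That proposition was stated for $k$ samples, but its proof reduces to one sample.

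Since $\mathrm{Est}_k^G=\frac1k\sum_{i=1}^kY_i$, realized on one infinite sequence of independent probe tuples, the strong law gives $\mathrm{Est}_k^G\to\mathbb EY_1=\|\At\|_F^2$ almost surely as $k\to\infty$. No finite variance is needed, which matters because higher-order Gaussian products are heavy-tailed. Consequently, nothing beyond Proposition~\ref{Thm:unbiased} is required.

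The main point needing care is the interpretation of ``as $k\to\infty$''. The estimators for different $k$ must be built from a single infinite i.i.d.\ sequence of probe tuples, rather than from fresh independent draws for each $k$. I would state this coupling explicitly at the start of the proof.
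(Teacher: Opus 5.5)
Your proposal is correct and follows the paper's argument: the i.i.d.\ samples $Y_i$ have the finite mean $\mathbb EY=\|\At\|_F^2$ from Proposition~\ref{Thm:unbiased}, so the strong law of large numbers applies. Making explicit that all $k$ share one infinite i.i.d.\ sequence of probe tuples is a worthwhile clarification the paper leaves implicit. One side remark is slightly off: $Y$ is a polynomial in Gaussian variables, so all its moments are finite (Theorem~\ref{Thm:Var} bounds the variance), and the heavy tail concerns only the moment generating function.
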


This consistency statement explains in which precise sense increasing
$k$ makes randomized norm estimation reliable. The finite-sample
bounds in the next sections quantify the required sample size.

\section{Rank-one tensors: exact variance and real Gaussian pooling}\label{Sec:Concentration}

We first consider the rank-one case, which identifies the sharp constants for the sample mean.

\begin{Def}\label{def:field-degree} Define
\begin{equation}\label{field-degree}
 \beta_{\mathds F}:=
 \begin{cases}
  1,&\mathds F=\REAL,\\
  2,&\mathds F=\COPX,\\
  4,&\mathds F=\QUAN.
 \end{cases}
\end{equation}
The parameter $\beta_{\mathds F}$ counts independent real Gaussian
components in one scalar probe.
\end{Def}

The following lemma combines the standard chi-square and Gamma moment identities \cite{CB24} with the real-component representation of quaternion Gaussian variables.

\begin{Lem}[Scalar Gaussian moments]\label{lem:scalar-gamma}
  Let $g\sim\mathsf N_{\mathds F}(0,1)$. Then
\begin{equation*}
 |g|^2\sim\operatorname{Gamma}\left(\frac{\beta_{\mathds F}}{2},\frac{2}{\beta_{\mathds F}}\right),
\end{equation*}
where the second parameter is the scale. For every $p>-\beta_{\mathds F}/2$,
\begin{equation}\label{scalar-moment}
 \mathbb E|g|^{2p}
 =\left(\frac{2}{\beta_{\mathds F}}\right)^p
 \frac{\Gamma(\beta_{\mathds F}/2+p)}{\Gamma(\beta_{\mathds F}/2)}.
\end{equation}
In particular, $\mathbb E|g|^2=1$ and $\mathbb E|g|^4=1+2/\beta_{\mathds F}$.
\end{Lem}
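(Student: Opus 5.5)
The plan is to reduce everything to the real-component representation of $g$ and then use the standard chi-square law. By definition, a standard Gaussian scalar over $\mathds F$ is
\[
 g=\beta_{\mathds F}^{-1/2}\sum_{\ell=0}^{\beta_{\mathds F}-1} q_\ell\,e_\ell ,
\]
where $q_0,\dots,q_{\beta_{\mathds F}-1}$ are i.i.d.\ $\mathsf N(0,1)$ and $e_\ell$ runs over the real basis units of $\mathds F$: $1$; $1,\ii$; or $1,\ii,\jj,\kk$. This is exactly the scalar version of the definitions of $\xv^{\REAL}$, $\xv^{\COPX}$ and \eqref{quat-rv}.

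\textbf{Step 1: the modulus as a chi-square.} In all three fields, $|g|^2=g\overline g$ equals the Euclidean squared norm of the real coefficient vector. For quaternions this follows from \eqref{quatMult}: $\qn q\overline{\qn q}=q_0^2+\|\vr q\|^2$, because the cross product $\vr q\times(-\vr q)$ vanishes. Hence
\[
 |g|^2=\beta_{\mathds F}^{-1}\sum_\ell q_\ell^2=\beta_{\mathds F}^{-1}\chi^2_{\beta_{\mathds F}} .
\]

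\textbf{Step 2: the Gamma law.} I will invoke the standard fact \cite{CB24} that $\chi^2_\beta\sim\operatorname{Gamma}(\beta/2,2)$ in the shape--scale parametrization. Scaling a Gamma variable multiplies its scale, so dividing by $\beta_{\mathds F}$ gives
\[
 |g|^2\sim\operatorname{Gamma}\bigl(\beta_{\mathds F}/2,\;2/\beta_{\mathds F}\bigr),
\]
which is the first claim.

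\textbf{Step 3: the moment formula.} For $X\sim\operatorname{Gamma}(a,\theta)$, integrate against the density $x^{a-1}e^{-x/\theta}/(\Gamma(a)\theta^a)$ and substitute $u=x/\theta$. This yields
\[
 \mathbb EX^p=\theta^p\,\frac{\Gamma(a+p)}{\Gamma(a)} .
\]
The integral converges exactly when $a+p>0$, that is, $p>-\beta_{\mathds F}/2$. Setting $a=\beta_{\mathds F}/2$ and $\theta=2/\beta_{\mathds F}$ gives \eqref{scalar-moment}.

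\textbf{Step 4: the special cases.} Using $\Gamma(a+1)=a\Gamma(a)$, the case $p=1$ gives $\mathbb E|g|^2=(2/\beta_{\mathds F})(\beta_{\mathds F}/2)=1$. The case $p=2$ gives
\[
 \mathbb E|g|^4=\frac{4}{\beta_{\mathds F}^2}\cdot\frac{\beta_{\mathds F}}{2}\Bigl(\frac{\beta_{\mathds F}}{2}+1\Bigr)=1+\frac{2}{\beta_{\mathds F}} .
\]

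There is no real obstacle here. The only point needing care is Step 1 in the quaternion case: we must confirm that noncommutativity does not affect $|g|^2$. Since the modulus is computed from $g\overline g$, which is real, the formula is unaffected.
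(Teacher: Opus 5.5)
Your proof is correct and follows essentially the same route as the paper: you write $g$ through its $\beta_{\mathds F}$ real Gaussian components, identify $|g|^2$ as $\beta_{\mathds F}^{-1}\chi^2_{\beta_{\mathds F}}$, and read off the Gamma law and its moments. Beyond the paper, which cites the Gamma moment formula, you also check $\qn q\overline{\qn q}=q_0^2+\|\vr q\|^2$ and derive the moment integral with its convergence condition $p>-\beta_{\mathds F}/2$.
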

\begin{proof}
  Write $g=\beta_{\mathds F}^{-1/2}\sum_{\ell=1}^{\beta_{\mathds F}}q_\ell e_\ell$, where the $q_\ell$ are independent real $\mathsf N(0,1)$ variables and the $e_\ell$ are the corresponding real, complex, or quaternion basis elements. Then
\begin{equation*}
 |g|^2=\frac1{\beta_{\mathds F}}\sum_{\ell=1}^{\beta_{\mathds F}}q_\ell^2
 \sim\frac1{\beta_{\mathds F}}\chi_{\beta_{\mathds F}}^2.
\end{equation*}
The Gamma moment formula \cite{CB24} gives \eqref{scalar-moment}.
\end{proof}

The Gamma representation is classical. It is recorded in a unified normalization to expose the scalar fourth moments $3$, $2$, and $3/2$.

\begin{Thm}[Exact rank-one distribution and variance]\label{thm:rank-one-exact}
  Let $\At=\av_1\circ\cdots\circ\av_d$ be a nonzero real rank-one tensor and let
\begin{equation*}
 Y=\left\|(\cdots(\At\times_d\xv_d)\cdots)\times_2\xv_2\right\|_2^2,
\end{equation*}
where the $\xv_j\sim\mathsf N_{\mathds F}(\mathbf0,\Id_{n_j})$ are mutually independent. Then
\begin{equation}\label{rank-one-distribution}
 \frac{Y}{\|\At\|_F^2}\overset{\mathrm d}=
 \prod_{j=2}^d Z_j,\qquad
 Z_j\overset{\mathrm{i.i.d.}}\sim
 \operatorname{Gamma}\left(\frac{\beta_{\mathds F}}{2},\frac{2}{\beta_{\mathds F}}\right).
\end{equation}
Consequently, for every $p>-\beta_{\mathds F}/2$,
\begin{equation}\label{rank-one-moments}
 \mathbb E\left(\frac{Y}{\|\At\|_F^2}\right)^p
 =\left[
 \left(\frac{2}{\beta_{\mathds F}}\right)^p
 \frac{\Gamma(\beta_{\mathds F}/2+p)}{\Gamma(\beta_{\mathds F}/2)}
 \right]^{d-1},
\end{equation}
and
\begin{equation}\label{rank-one-variance}
 \operatorname{Var}(\mathrm{Est}_k^G)
 =\frac{(1+2/\beta_{\mathds F})^{d-1}-1}{k}\|\At\|_F^4.
\end{equation}
\end{Thm}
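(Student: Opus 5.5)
The plan is to reduce the contracted vector to a scalar multiple of $\av_1$, and then show that the scalar factors into independent pieces. For a rank-one tensor, the contractions act modewise. Contracting $\At=\av_1\circ\cdots\circ\av_d$ successively with $\xv_d,\dots,\xv_2$ yields the vector $\av_1\,(\av_d^T\xv_d)\cdots(\av_2^T\xv_2)$ in exactly this quaternion order. This follows from the entrywise convention fixed after \eqref{tensor-estimator}, since the $\av_j$ are real and commute with everything. Write $s_j:=\av_j^T\xv_j\in\mathds F$. Because the modulus on $\mathds F$ is multiplicative (for $\QUAN$, $|\qn p\qn q|=|\qn p||\qn q|$, since $\qn p\qn q\overline{\qn q}\,\overline{\qn p}=|\qn q|^2|\qn p|^2$ with real $|\qn q|^2$), we get
\[
Y=\|\av_1\|_2^2\prod_{j=2}^d|s_j|^2 .
\]
The ordering issue from noncommutativity therefore disappears once we take moduli.

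Next we identify the law of each $s_j$. Decompose $\xv_j=\beta_{\mathds F}^{-1/2}\sum_\ell \qv_{\ell}e_\ell$ into real components. Then $s_j=\beta_{\mathds F}^{-1/2}\sum_\ell(\av_j^T\qv_\ell)e_\ell$, and the real scalars $\av_j^T\qv_\ell$ are i.i.d.\ $\mathsf N(0,\|\av_j\|_2^2)$. Hence $s_j/\|\av_j\|_2\sim\mathsf N_{\mathds F}(0,1)$, which is an alternative to invoking the invariance in Lemma~\ref{Lem:3.1} directly. By Lemma~\ref{lem:scalar-gamma}, $Z_j:=|s_j|^2/\|\av_j\|_2^2\sim\operatorname{Gamma}(\beta_{\mathds F}/2,2/\beta_{\mathds F})$. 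The $Z_j$ are independent because the $\xv_j$ are. Using $\|\At\|_F^2=\prod_{j=1}^d\|\av_j\|_2^2$ then gives \eqref{rank-one-distribution}.

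For \eqref{rank-one-moments}, independence lets $\mathbb E\prod_j Z_j^p$ factor as $\prod_j\mathbb EZ_j^p$, and each factor is given by \eqref{scalar-moment}. This holds for $p>-\beta_{\mathds F}/2$, where each factor is finite. For the variance, we specialise to $p=1,2$: $\mathbb E(Y/\|\At\|_F^2)=1$ and $\mathbb E(Y/\|\At\|_F^2)^2=(1+2/\beta_{\mathds F})^{d-1}$. Thus $\operatorname{Var}Y=[(1+2/\beta_{\mathds F})^{d-1}-1]\|\At\|_F^4$. Since $\mathrm{Est}_k^G$ averages $k$ i.i.d.\ copies, \eqref{rank-one-variance} follows.

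The only delicate step is the first one, namely justifying the scalar factorization and the multiplicativity of the modulus in the quaternion case without commuting factors. Everything else is a direct application of Lemma~\ref{lem:scalar-gamma} and independence.
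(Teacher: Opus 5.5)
Your proposal is correct and follows essentially the same route as the paper. Both arguments factor $Y=\|\av_1\|_2^2\prod_{j=2}^d|\langle\av_j,\xv_j\rangle|^2$ via multilinearity and multiplicativity of the modulus, identify each normalized factor as a standard $\mathsf N_{\mathds F}(0,1)$ variable, and then apply Lemma~\ref{lem:scalar-gamma}, independence, and the specializations $p=1,2$. The differences are only in detail: you check $s_j/\|\av_j\|_2\sim\mathsf N_{\mathds F}(0,1)$ by a direct real-component computation where the paper cites rotational invariance, and you spell out the quaternion product order and the identity $|\qn p\qn q|=|\qn p||\qn q|$, which the paper takes for granted.
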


\begin{proof}
  Multilinearity and multiplicativity of the modulus give
\begin{equation*}
 Y=\|\av_1\|_2^2\prod_{j=2}^d
 |\langle\av_j,\xv_j\rangle|^2.
\end{equation*}
Rotational invariance implies
\begin{equation*}
 g_j:=\frac{\langle\av_j,\xv_j\rangle}{\|\av_j\|_2}
 \sim\mathsf N_{\mathds F}(0,1).
\end{equation*}
Since $\|\At\|_F^2=\prod_{j=1}^d\|\av_j\|_2^2$, Lemma~\ref{lem:scalar-gamma} means 
\begin{equation*}
  \frac{Y}{\|\At\|_F^2}=\prod_{j=2}^{d}\frac{|\langle\av_j,\xv_j\rangle|^2}{\|\av_j\|_2^2}=\prod_{j=2}^{d}|g_j|^2  
  \overset{\mathrm d}=
 \prod_{j=2}^d Z_j,
\end{equation*}
which proves \eqref{rank-one-distribution}. The independence yields \eqref{rank-one-moments}. 

Setting $p=1$ and $p=2$ in \eqref{rank-one-moments}, respectively,
gives
\begin{equation*}
  \mathbb EY=\|\At\|_F^2,
  \qquad
  \mathbb EY^2
  =
  \left(1+\frac{2}{\beta_{\mathds F}}\right)^{d-1}\|\At\|_F^4.
\end{equation*}
Consequently,
\begin{equation*}
  \operatorname{Var}(Y)
  =
  \left[
    \left(1+\frac{2}{\beta_{\mathds F}}\right)^{d-1}-1
  \right]\|\At\|_F^4.
\end{equation*}
Since $\mathrm{Est}_k^G$ is the average of $k$ independent copies of
$Y$, its variance is $\operatorname{Var}(Y)/k$, which proves
\eqref{rank-one-variance}.
\end{proof}

The exact variance shows that its exponential dependence on $d-1$ is intrinsic. Motivated by the rank-one formula, define
\begin{equation}\label{rho-definition}
 \varrho(p,\mathds F):=
 \left(\frac{2}{\beta_{\mathds F}}\right)^p
 \frac{\Gamma(p+\beta_{\mathds F}/2)}
 {\Gamma(\beta_{\mathds F}/2)}.
\end{equation}
For integer $p\ge1$, this equals $(2p-1)!!$, $p!$, and $(p+1)!/2^p$ over $\REAL$, $\COPX$, and $\QUAN$, respectively.

\subsection{Equal-budget comparison through real Gaussian pooling}\label{sec:equal-budget}

The smaller fourth moment of a quaternion Gaussian probe raises a
natural question: does its variance reduction reflect a genuinely
noncommutative effect, or does it arise from the pooling of several
real Gaussian components?

As in \eqref{quat-rv}, a standard quaternion Gaussian vector admits the representation
\begin{equation}\label{quaternion-real-components}
  \xv
  =
  \frac{1}{2}
  \left(
    \qv_{0}
    +\qv_{1}\ii
    +\qv_{2}\jj
    +\qv_{3}\kk
  \right),
\end{equation}
where
\(
\qv_{0},\ldots,\qv_{3}
\)
are independent standard real Gaussian vectors. Thus, one quaternion
probe uses four independent real Gaussian components in each contracted
mode. In the matrix case \(d=2\), this decomposition gives
\begin{equation}\label{matrix-four-real}
  \|\Am\xv\|_2^2
  =
  \frac{1}{4}
  \sum_{\ell=0}^{3}\|\Am\qv_{\ell}\|_2^2.
\end{equation}
Consequently, one quaternion matrix probe is statistically equivalent
to the average of four independent real Gaussian probes. For tensors
of order \(d\ge3\), however, the four real components are pooled within
each mode before the resulting modewise energies are multiplied. This
difference leads to a strict variance improvement.

\begin{Thm}[Equal-real-randomness comparison for rank-one tensors]
\label{thm:equal-budget}
Let \(\At\ne0\) be a rank-one tensor of order \(d\). Denote by \(Y_{\QUAN}\) one quaternion Gaussian sample and by
\[
  \overline{Y}_{4,\REAL}
  :=
  \frac{1}{4}\sum_{\ell=0}^{3}Y_{\REAL}^{(\ell)}
\]
the average of four independent real Gaussian tensor samples. Both
estimators use \(4(d-1)\) independent real Gaussian probe vectors. Their
relative variances are
\begin{equation}\label{equal-budget-variances}
  \frac{\operatorname{Var}(Y_{\QUAN})}{\|\At\|_F^4}
  =
  \left(\frac{3}{2}\right)^{d-1}-1,
  \qquad
  \frac{\operatorname{Var}(\overline{Y}_{4,\REAL})}{\|\At\|_F^4}
  =
  \frac{3^{d-1}-1}{4}.
\end{equation}
The two variances are equal when \(d=2\), whereas
\begin{equation}\label{equal-budget-strict}
  \left(\frac{3}{2}\right)^{d-1}-1
  <
  \frac{3^{d-1}-1}{4}
\end{equation}
for every \(d\ge3\).
\end{Thm}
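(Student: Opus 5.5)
The plan is to read both variances off Theorem~\ref{thm:rank-one-exact} and then prove \eqref{equal-budget-strict} by an elementary inequality in $d$. For the quaternion sample, put $\beta_{\QUAN}=4$ in \eqref{rank-one-variance} with $k=1$. Then $1+2/\beta_{\QUAN}=3/2$, so the relative variance is $(3/2)^{d-1}-1$. For the real samples, take $\beta_{\REAL}=1$, which gives $\operatorname{Var}(Y_{\REAL})=(3^{d-1}-1)\|\At\|_F^4$. The four real samples $Y_{\REAL}^{(\ell)}$ are independent, since they are built from disjoint independent families of real Gaussian vectors. Their average therefore has variance $(3^{d-1}-1)\|\At\|_F^4/4$, which establishes \eqref{equal-budget-variances}.

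The budget statement is bookkeeping. By \eqref{quaternion-real-components}, each of the $d-1$ quaternion probes uses four real Gaussian vectors. Each of the four real samples uses $d-1$ real vectors. Both counts equal $4(d-1)$.

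For $d=2$ both sides equal $1/2$. For $d\ge3$, set $m=d-1\ge2$. The inequality $(3/2)^m-1<(3^m-1)/4$ is equivalent to
\[
 4\cdot(3/2)^m-3<3^m .
\]
Dividing by $(3/2)^m$ turns this into $4-3(2/3)^m<2^m$. For $m=2$ the left side is $4-4/3=8/3<4$. For $m\ge2$ the left side is less than $4$, while the right side is at least $4$, so the inequality holds strictly for every $m\ge2$. This argument is cleaner than induction.

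I do not expect a real obstacle here, since the result is a corollary of Theorem~\ref{thm:rank-one-exact} plus arithmetic. The step that needs care is the claim that the four real samples in $\overline{Y}_{4,\REAL}$ are mutually independent, so that the variance of their average is exactly one quarter of a single sample's variance. This is what makes the comparison with the within-mode pooling of the quaternion probe precise. I would also note why the two estimators differ despite using the same real vectors. By \eqref{matrix-four-real}-type pooling, the quaternion sample equals $\|\At\|_F^2\prod_{j}\bigl(\tfrac14\sum_\ell q_{j\ell}^2\bigr)$, a product of modewise averages. The real average is $\tfrac14\sum_\ell\prod_j q_{j\ell}^2$, an average of products. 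This is only interpretive, however, and is not needed for the proof.
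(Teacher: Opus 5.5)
Your proposal is correct and follows the paper's proof: both read the two variances off \eqref{rank-one-variance}, with $\beta_{\QUAN}=4$ for the quaternion sample and $\beta_{\REAL}=1$ divided by four for the independent real average, and then check an elementary inequality in $d$. The only difference is that final step: the paper sets $h_d=3^{d-1}-4(3/2)^{d-1}+3$ and uses $h_2=0$ with the recursion $h_{d+1}=3h_d+6[(3/2)^{d-1}-1]$, whereas you divide by $(3/2)^m$ and compare $4-3(2/3)^m<4\le 2^m$ directly, which is equally valid and slightly shorter.
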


\begin{proof}
The exact rank-one variance formula in
Theorem~\ref{thm:rank-one-exact} gives
\[
  \frac{\operatorname{Var}(Y_{\QUAN})}{\|\At\|_F^4}
  =
  \left(\frac{3}{2}\right)^{d-1}-1.
\]
Each real Gaussian sample has relative variance \(3^{d-1}-1\). Since
\(\overline{Y}_{4,\REAL}\) is the average of four independent samples,
\[
  \frac{\operatorname{Var}(\overline{Y}_{4,\REAL})}{\|\At\|_F^4}
  =
  \frac{3^{d-1}-1}{4}.
\]
This proves \eqref{equal-budget-variances}.

To compare the two quantities, define
\[
  h_d:=3^{d-1}-4\left(\frac{3}{2}\right)^{d-1}+3.
\]
Then \(h_2=0\), and
\[
  h_{d+1}
  =
  3h_d+6\left[\left(\frac{3}{2}\right)^{d-1}-1\right].
\]
Hence \(h_d>0\) for every \(d\ge3\), which is equivalent to
\eqref{equal-budget-strict}.
\end{proof}

The comparison uses the same number of underlying real Gaussian
draws per mode. It establishes a variance advantage in the rank-one
case; the costs of contractions and quaternion arithmetic must be
considered separately.

For a rank-one tensor
\(
\At=\av_1\circ\cdots\circ\av_d
\),
the quaternion estimator has the entirely real distributional
representation
\begin{equation}\label{rank-one-real-pooling}
  \frac{Y_{\QUAN}}{\|\At\|_F^2}
  \overset{\mathrm d}{=}
  \prod_{j=2}^{d}
  \left(
    \frac{1}{4}\sum_{a=0}^{3}g_{j,a}^2
  \right),
  \qquad
  g_{j,a}\stackrel{\mathrm{iid}}{\sim}\mathsf N(0,1).
\end{equation}
Therefore, the strict gain for \(d\ge3\) is explained by the pooling
of four real Gaussian components within each mode before the modewise
energies are multiplied. It does not require noncommutativity in the
rank-one setting. More generally, suppose mode $j$ pools an
integer number $\beta_j\ge1$ of independent real Gaussian squares.
Its normalized scalar energy has second moment $1+2/\beta_j$, so
the variance is minimized by minimizing
$\prod_{j=2}^d(1+2/\beta_j)$ for a fixed total
$B=\sum_{j=2}^d\beta_j$. The function
$f(b)=\log(1+2/b)$ is strictly convex for $b>0$.
For allocations, Jensen's inequality gives
$\beta_j=B/(d-1)$ for every $j$. 

We illustrate Theorems~\ref{thm:rank-one-exact} and \ref{thm:equal-budget} by estimating the squared
Frobenius norm of real rank-one tensors with real, complex, and
quaternion probes constructed from the same underlying real random
numbers. For each order $d\in\{2,\ldots,8\}$, we generate independent
Gaussian vectors $\av_j\in\REAL^{1000}$ and normalize them to unit
length, $j=1,\ldots,d$. Thus
$\At=\av_1\circ\cdots\circ\av_d$ has
$\|\At\|_F^2=1$.

In each of $500$ independent repetitions, we draw $d-1$ arrays
$G_j\in\REAL^{1000\times200}$ with independent standard normal
entries. For each contracted mode $j$, the $200$ columns of $G_j$
serve directly as real probes. Pairing consecutive columns and
dividing by $\sqrt2$ produces $100$ standard complex probes; grouping
four consecutive columns and dividing by $2$ produces $50$ standard
quaternion probes as in \eqref{quat-rv}. Hence the three methods use
the same $200\cdot1000(d-1)$ real Gaussian numbers per repetition,
although their numbers of tensor samples are respectively
$k_{\REAL}=200$, $k_{\COPX}=100$, and $k_{\QUAN}=50$.
The methods are coupled through $G_j$, but each method separately has
independent standard probes across samples and modes, as required by
Theorem~\ref{thm:rank-one-exact}.

Writing $\beta_{\mathds F}\in\{1,2,4\}$ as in
\eqref{field-degree}, the exact relative variance predicted by
\eqref{rank-one-variance} is
\begin{equation}\label{equal-budget-three-fields}
 \frac{\operatorname{Var}(\mathrm{Est}_{k_{\mathds F}}^G)}{\|\At\|_F^4}
 =\frac{\beta_{\mathds F}}{200}
 \left[\left(1+\frac{2}{\beta_{\mathds F}}\right)^{d-1}-1\right]
 =\begin{cases}
 (3^{d-1}-1)/200,&\mathds F=\REAL,\\
 (2^{d-1}-1)/100,&\mathds F=\COPX,\\
 ((3/2)^{d-1}-1)/50,&\mathds F=\QUAN.
 \end{cases}
\end{equation}
For $d=2$, all three values equal $0.01$. For every $d\ge3$,
the formula gives a strict real--complex--quaternion ordering as $\beta_{\mathds F}$ varies. In particular, the
$200$ real samples are exactly $50$ four-real averages, so their
comparison with the $50$ quaternion samples tests
Theorem~\ref{thm:equal-budget} at the same total randomness budget.

\begin{figure}[tb]
\centering
\includegraphics[width=0.82\textwidth]{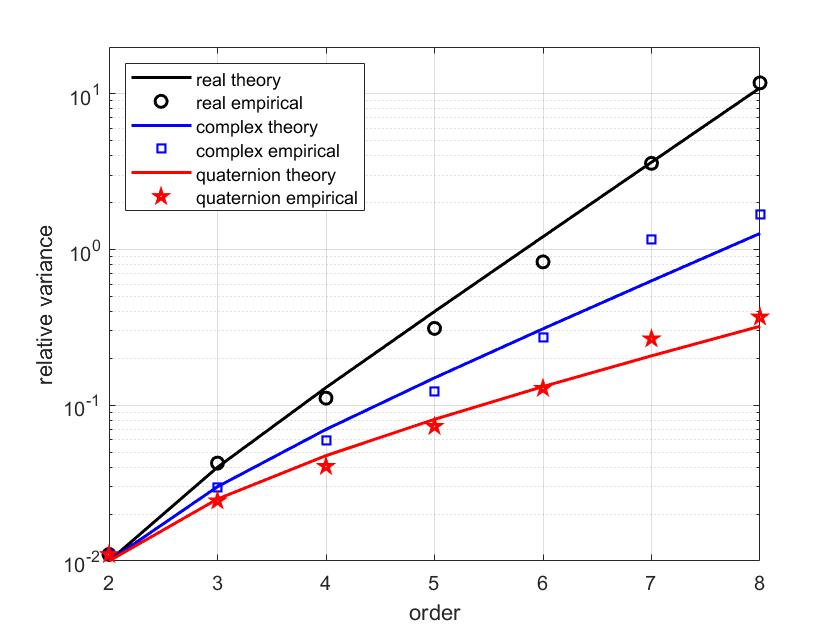}
\caption{Relative variance for rank-one tensors under the same budget
of real Gaussian numbers. Solid curves are the exact values in
\eqref{equal-budget-three-fields}; markers are sample variances across
$500$ independent repetitions. }
\label{fig:rankone-equal-budget}
\end{figure}

Figure~\ref{fig:rankone-equal-budget} compares sample variances
with the exact curves. The theoretical values coincide at $d=2$ and
have the real--complex--quaternion ordering for the displayed higher
orders. The markers broadly follow these values, with larger
departures at higher order. Since each marker uses only $500$
repetitions and one rank-one sample is a product of $d-1$ Gamma
variables, the empirical variance can fluctuate substantially.
The plot checks the predicted ordering and scale; it does not give
a precise estimate of the high-order variance constants.

\section{Orthogonal CP tensors and effective rank}\label{sec:ocp-main}

The rank-one result extends to an exact structure-dependent formula for
orthogonal CP tensors. Consider
\begin{equation}\label{orthogonal-cp}
 \At=\sum_{r=1}^{R}\lambda_r\,
 \uv_{1r}\circ\uv_{2r}\circ\cdots\circ\uv_{dr},
 \qquad \lambda_r\ne0,\qquad
 \uv_{jr}^{T}\uv_{js}=\delta_{rs},
\end{equation}
for all $j=1,\ldots,d$ and $r,s=1,\ldots,R$, where the symbol $\delta_{rs}$ denotes the Kronecker delta.
To describe how the Frobenius energy is distributed among the
orthogonal CP components, we make the following definition.

\begin{Def}[Participation-ratio effective rank]\label{def:ocp-effective-rank}
Let $\At$ have the orthogonal CP decomposition
\eqref{orthogonal-cp}. Set $p_r:=\lambda_r^2/\|\At\|_F^2$,
so $p_r>0$ and $\sum_{r=1}^R p_r=1$. The participation-ratio
effective rank of $\At$ is
\begin{equation}\label{effective-rank}
 r_{\mathrm{eff}}(\At)
 :=\frac{1}{\sum_{r=1}^R p_r^2}
 =\frac{\left(\sum_{r=1}^R\lambda_r^2\right)^2}
 {\sum_{r=1}^R\lambda_r^4}.
\end{equation}
\end{Def}

This is the participation ratio of the normalized component
energies $p_r$ \cite{RecanatesiEtAl2022}.
Formula \eqref{effective-rank} measures the effective number of orthogonal CP components contributing to the tensor.

\begin{Prop}[Basic properties of the effective rank]
\label{prop:effective-rank-properties}
Under the assumptions of Definition~\ref{def:ocp-effective-rank},
the following hold.
\begin{enumerate}
\item \emph{Range and equality.}
$1\le r_{\mathrm{eff}}(\At)\le R$.
The lower equality holds exactly when $R=1$; the upper equality
holds exactly when $|\lambda_1|=\cdots=|\lambda_R|$.
\item \emph{Mode-unfolding formula.}
For every $j=1,\ldots,d$, define
$G_j:=\Am_{(j)}\Am_{(j)}^T$. Then
\begin{equation}\label{effective-rank-unfolding}
 r_{\mathrm{eff}}(\At)
 =\frac{(\mathrm{trace}(G_j))^2}{\mathrm{trace}(G_j^2)}
 =\frac{\|\Am_{(j)}\|_F^4}{\|\Am_{(j)}\Am_{(j)}^T\|_F^2},
 \qquad j=1,2,\dots,d.
\end{equation}
Thus the same effective rank is obtained from every mode unfolding.
\item \emph{Relation to the coefficient of variation (CV).}
The mean of the weights is $\bar p:=\frac{1}{R}\sum_{r=1}^R p_r=1/R$.
Their coefficient of variation is
\begin{equation}\label{effective-cv-definition}
 \mathrm{CV}_p
 :=\frac{\sqrt{\frac{1}{R}\sum\limits_{r=1}^R(p_r-\bar p)^2}}{\bar p}
 =\sqrt{R\sum_{r=1}^R p_r^2-1}.
\end{equation}
Consequently,
\begin{equation}\label{effective-cv-identity}
 r_{\mathrm{eff}}(\At)=\frac{R}{1+\mathrm{CV}_p^2}.
\end{equation}
\end{enumerate}
Moreover, $r_{\mathrm{eff}}$ is invariant under a common nonzero
rescaling of the $\lambda_r$, sign changes, and permutations of
the orthogonal components.
\end{Prop}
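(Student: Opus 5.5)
The proof is a direct computation, organized around the weights $p_r=\lambda_r^2/\|\At\|_F^2$. We first check that $\|\At\|_F^2=\sum_r\lambda_r^2$. Expanding the inner product, $\langle\At,\At\rangle=\sum_{r,s}\lambda_r\lambda_s\prod_{j=1}^d\uv_{jr}^T\uv_{js}=\sum_{r}\lambda_r^2$ by orthonormality in every mode. Hence the $p_r$ are positive and sum to one, and $r_{\mathrm{eff}}=1/\sum_r p_r^2$ as in \eqref{effective-rank}.

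\emph{Item 1.} For the lower bound, $p_r\in(0,1]$ gives $p_r^2\le p_r$, so $\sum_r p_r^2\le 1$. Equality requires every $p_r\in\{0,1\}$; since each $p_r>0$, this forces $p_r=1$, and because the weights sum to one, $R=1$. For the upper bound, Cauchy--Schwarz gives $1=(\sum_r p_r)^2\le R\sum_r p_r^2$. Equality holds iff all $p_r$ are equal, which is equivalent to $|\lambda_1|=\cdots=|\lambda_R|$.

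\emph{Item 2.} Fix a mode $j$. The mode-$j$ unfolding is
\[
 \Am_{(j)}=\sum_r\lambda_r\,\uv_{jr}\,\wv_r^T,
\]
where $\wv_r$ is the Kronecker product of the remaining factors $\uv_{ir}$, $i\ne j$, taken in the paper's ordering. By the mixed-product property, $\wv_r^T\wv_s=\prod_{i\ne j}\uv_{ir}^T\uv_{is}=\delta_{rs}$. Since $d\ge2$, at least one other mode is present, so this orthonormality holds. It follows that
\[
 G_j=\Am_{(j)}\Am_{(j)}^T=\sum_r\lambda_r^2\,\uv_{jr}\uv_{jr}^T .
\]
This is a spectral decomposition with orthonormal eigenvectors $\uv_{jr}$, so $\mathrm{trace}(G_j)=\sum_r\lambda_r^2$ and $\mathrm{trace}(G_j^2)=\sum_r\lambda_r^4$. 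This yields \eqref{effective-rank-unfolding}. The second form uses $\mathrm{trace}(G_j)=\|\Am_{(j)}\|_F^2$ and $\mathrm{trace}(G_j^2)=\|G_j\|_F^2$, which holds because $G_j$ is symmetric.

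\emph{Item 3.} Expand the variance with $\bar p=1/R$:
\[
 \frac1R\sum_r(p_r-\bar p)^2=\frac1R\sum_r p_r^2-\frac{1}{R^2}.
\]
Dividing by $\bar p^2=1/R^2$ gives $\mathrm{CV}_p^2=R\sum_r p_r^2-1$, which is \eqref{effective-cv-definition}. Then $\sum_r p_r^2=(1+\mathrm{CV}_p^2)/R$, and inverting gives \eqref{effective-cv-identity}.

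\emph{Invariances.} The ratio in \eqref{effective-rank} is homogeneous of degree zero in $(\lambda_r)$, so it is unchanged by a common nonzero rescaling. It depends only on $\lambda_r^2$, so sign changes do not affect it. It is symmetric in $r$, so permuting the components does not affect it.

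The only step requiring care is the orthonormality $\wv_r^T\wv_s=\delta_{rs}$ in item 2, since it needs the factors to be orthonormal in the other modes as well, not only in mode $j$. Every other step is elementary algebra.
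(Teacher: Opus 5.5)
Your proposal is correct and follows essentially the same route as the paper. It uses $\sum_r p_r^2\le 1$ and Cauchy--Schwarz for the range, and the orthonormal factorization $\Am_{(j)}=\sum_r\lambda_r\uv_{jr}\wv_r^T$ (the paper's $U_j\operatorname{diag}(\lambda_r)V_j^T$) to read off the eigenvalues $\lambda_r^2$ of $G_j$, with direct algebra for the CV identity and the invariances. Your explicit checks that $\|\At\|_F^2=\sum_r\lambda_r^2$, and of the lower equality case via $p_r^2\le p_r$, fill in details the paper leaves implicit.
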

\begin{proof}
Since every $p_r>0$ and $\sum_rp_r=1$, we have
$\sum_rp_r^2\le1$, with equality exactly when $R=1$.
Cauchy--Schwarz gives
$1=(\sum_rp_r)^2\le R\sum_rp_r^2$;
equality holds exactly when $p_1=\cdots=p_R=1/R$,
equivalently when all $|\lambda_r|$ are equal.
This proves the first claim.

For a fixed mode $j$, let $U_j$ have columns $\uv_{jr}$ and let
$V_j$ have columns
$\bigotimes_{\ell\ne j}\uv_{\ell r}$ (in the unfolding
order). Both sets of columns are orthonormal by \eqref{orthogonal-cp}.
Thus $\Am_{(j)}=U_j\operatorname{diag}(\lambda_r)V_j^T$ has
nonzero singular values $|\lambda_r|$, and $G_j$ has nonzero
eigenvalues $\lambda_r^2=\|\At\|_F^2p_r$. Hence
$\operatorname{tr}G_j=\|\At\|_F^2$ and
$\operatorname{tr}(G_j^2)=\|\At\|_F^4\sum_rp_r^2$, proving
\eqref{effective-rank-unfolding}.

Finally, $\sum_{r=1}^R p_r=1$ and $\bar p=1/R$ give
\[ \mathrm{CV}_p^2
 =\frac{\frac{1}{R}\sum_{r=1}^R(p_r-\bar p)^2}{\bar p^2}
 =R\sum_{r=1}^R p_r^2-1
 =\frac{R}{r_{\mathrm{eff}}(\At)}-1, \]
which proves \eqref{effective-cv-definition} and
\eqref{effective-cv-identity}. The invariances
follow directly from \eqref{effective-rank}.
\end{proof}

Consequently, $r_{\mathrm{eff}}$ approaches one as a single
$p_r$ approaches one and the others approach zero, although
$r_{\mathrm{eff}}>1$ for every $R>1$ under \eqref{orthogonal-cp}.
It equals $R$ for equal component energies. Formula
\eqref{effective-cv-identity} makes clear why the number $R$ alone
cannot predict the variance: highly unequal $p_r$ can give a
small effective rank even when $R$ is large.

\begin{Thm}[Exact Gaussian variance for orthogonal CP tensors]\label{thm:ocp-gaussian}
  Let $\At$ satisfy \eqref{orthogonal-cp}, and let
\begin{equation*}
 Y=\left\|(\cdots(\At\times_d\xv_d)\cdots)\times_2\xv_2\right\|_2^2
\end{equation*}
with mutually independent standard $\mathds F$-Gaussian probes. Then
\begin{equation}\label{ocp-gaussian-var}
 \operatorname{Var}(Y)
 =\left(\varrho(2,\mathds F)^{d-1}-1\right)
 \sum_{r=1}^{R}\lambda_r^4.
\end{equation}
Consequently,
\begin{equation}\label{ocp-gaussian-relative}
 \frac{\operatorname{Var}(\mathrm{Est}_k^G)}{\|\At\|_F^4}
 =\frac{\varrho(2,\mathds F)^{d-1}-1}
 {k\,r_{\mathrm{eff}}(\At)}.
\end{equation}
\end{Thm}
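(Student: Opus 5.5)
The plan is to reduce $Y$ to a weighted sum of \emph{independent} rank-one samples, one per orthogonal component, and then apply Theorem~\ref{thm:rank-one-exact} to each summand.

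\textbf{Step 1: expand the contracted vector.} By multilinearity, contracting \eqref{orthogonal-cp} with $\xv_d,\ldots,\xv_2$ in the fixed order gives
\[
 (\cdots(\At\times_d\xv_d)\cdots)\times_2\xv_2=\sum_{r=1}^R\lambda_r\,\uv_{1r}\,c_r,
 \qquad c_r:=g_{dr}\cdots g_{2r},
 \qquad g_{jr}:=\langle\uv_{jr},\xv_j\rangle=\sum_{s}(u_{jr})_s(x_j)_s .
\]
The quaternion product in $c_r$ is kept in exactly the order dictated by the contractions. Since the factor vectors $\uv_{jr}$ and the coefficients $\lambda_r$ are real, they commute with every quaternion scalar, so only the order of the $g$'s matters.

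\textbf{Step 2: kill the cross terms.} Taking the squared norm,
\[
 \Bigl\|\sum_r\lambda_r\uv_{1r}c_r\Bigr\|_2^2=\sum_{r,s}\lambda_r\lambda_s\,(\uv_{1r}^T\uv_{1s})\,c_r\overline{c_s}=\sum_r\lambda_r^2|c_r|^2 .
\]
This uses only that the real coefficients $\uv_{1r}^T\uv_{1s}=\delta_{rs}$ can be pulled out of the quaternion products, so no commutation of quaternions is needed. Multiplicativity of the modulus then gives
\[
 Y=\sum_{r=1}^R\lambda_r^2W_r,\qquad W_r:=\prod_{j=2}^d|g_{jr}|^2 .
\]

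\textbf{Step 3: independence of the $W_r$ (main obstacle).} I expect this to be the step requiring the most care. Fix a mode $j$. Extend the real orthonormal vectors $\uv_{j1},\dots,\uv_{jR}$ to a real orthonormal basis of $\REAL^{n_j}$; this is in particular a quaternion (or complex) orthonormal basis. By the last assertion of Lemma~\ref{Lem:3.1}, and its obvious real and complex analogues, the coordinates $g_{j1},\dots,g_{jR}$ are then independent standard $\mathds F$-Gaussians. Independence across modes $j$ holds because the probes $\xv_2,\dots,\xv_d$ are independent. Hence the whole family $\{g_{jr}\}$ is i.i.d.\ $\mathsf N_{\mathds F}(0,1)$, and so $W_1,\dots,W_R$ are independent. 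Each $W_r$ is distributed as the normalized rank-one sample of \eqref{rank-one-distribution}.

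\textbf{Step 4: compute the variance.} By Lemma~\ref{lem:scalar-gamma}, or by \eqref{rank-one-moments}, we have $\mathbb EW_r=1$ and $\mathbb EW_r^2=\varrho(2,\mathds F)^{d-1}$. Independence then yields
\[
 \operatorname{Var}(Y)=\sum_{r=1}^R\lambda_r^4\operatorname{Var}(W_r)=\bigl(\varrho(2,\mathds F)^{d-1}-1\bigr)\sum_{r=1}^R\lambda_r^4,
\]
which is \eqref{ocp-gaussian-var}. Since $\mathrm{Est}_k^G$ is the average of $k$ i.i.d.\ copies of $Y$, its variance is $\operatorname{Var}(Y)/k$. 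Dividing by $\|\At\|_F^4=\bigl(\sum_r\lambda_r^2\bigr)^2$ and applying Definition~\ref{def:ocp-effective-rank} gives \eqref{ocp-gaussian-relative}.
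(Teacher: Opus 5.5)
Your proposal is correct and follows the paper's proof step for step: you expand the contraction as $\sum_r\lambda_r\uv_{1r}g_{dr}\cdots g_{2r}$, use first-mode orthogonality to reduce $Y$ to $\sum_r\lambda_r^2 Y_r$, obtain independence of the $Y_r$ from orthonormality in modes $2,\ldots,d$ together with Gaussian rotational invariance, and finish with the rank-one moments of Theorem~\ref{thm:rank-one-exact}. Your remarks on quaternion ordering and on extending the real orthonormal factors to a full basis only make explicit details that the paper leaves implicit.
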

\begin{proof}
Set
\begin{equation*}
  g_{jr}:=\langle\uv_{jr},\xv_j\rangle,
  \qquad j=2,\ldots,d,\quad r=1,\ldots,R.
\end{equation*}
After contraction along modes $2,\ldots,d$, the resulting vector is
\begin{equation*}
  (\cdots(\At\times_d\xv_d)\cdots)\times_2\xv_2
  =
  \sum_{r=1}^{R}
  \lambda_r\uv_{1r}g_{dr}\cdots g_{2r}.
\end{equation*}
The orthogonality of the first-mode factors,
$\uv_{1r}^{T}\uv_{1s}=\delta_{rs}$, eliminates all cross terms when
the squared norm is taken. Therefore,
\begin{equation*}
  Y
  =
  \sum_{r=1}^{R}\lambda_r^2Y_r,
  \qquad
  Y_r:=\prod_{j=2}^{d}|g_{jr}|^2.
\end{equation*}

The orthogonality in the remaining modes is used to determine the joint
distribution of the variables $Y_r$. For each fixed
$j=2,\ldots,d$, the vectors
$\uv_{j1},\ldots,\uv_{jR}$ are orthonormal. By the rotational
invariance of the standard Gaussian distribution,
\begin{equation*}
  g_{j1},\ldots,g_{jR}
\end{equation*}
are independent standard Gaussian variables over $\mathds F$.
Moreover, the probe vectors $\xv_2,\ldots,\xv_d$ are mutually
independent. It follows that all the variables
$\{g_{jr}:j=2,\ldots,d,\ r=1,\ldots,R\}$ are mutually independent,
and hence $Y_1,\ldots,Y_R$ are independent. Then, by Theorem \ref{thm:rank-one-exact}, the random variables $Y_1,\ldots,Y_R$ satisfy
\begin{equation*}
 \mathbb EY_r=1,
 \qquad
 \mathbb EY_r^2=\varrho(2,\mathds F)^{d-1}.
\end{equation*}
It follows that
\begin{equation*}
 \operatorname{Var}(Y)
 =\sum_{r=1}^{R}\lambda_r^4\operatorname{Var}(Y_r)
 =\left(\varrho(2,\mathds F)^{d-1}-1\right)
 \sum_{r=1}^{R}\lambda_r^4.
\end{equation*}

Finally, orthogonality of the CP components also gives $\|\At\|_F^2=\sum_{r=1}^{R}\lambda_r^2$.
Dividing the preceding variance formula by $\|\At\|_F^4$, using
the definition of $r_{\mathrm{eff}}(\At)$, and averaging $k$
independent samples yield
\begin{equation*}
  \frac{\operatorname{Var}(\mathrm{Est}_k^G)}{\|\At\|_F^4}
  =
  \frac{\varrho(2,\mathds F)^{d-1}-1}
  {k\,r_{\mathrm{eff}}(\At)}.
\end{equation*}
This proves \eqref{ocp-gaussian-relative}.
\end{proof}

Identity \eqref{ocp-gaussian-relative} gives the exact
variance reduction for an orthogonal CP tensor in terms of
$r_{\mathrm{eff}}(\At)$. The real and complex cases \cite{MA26} are consistent
with the Kronecker trace-estimation viewpoint discussed in the
introduction; the formula also covers quaternion probes. In
particular, evenly spread component energies reduce the variance
relative to the rank-one worst case.

\begin{Cor}[Effective-rank Chebyshev bound]\label{cor:effective-rank-bounds}
Let $\At\ne0$ satisfy \eqref{orthogonal-cp}. For every $\varepsilon>0$,
\begin{equation}\label{effective-rank-chebyshev}
 \mathbb P\left\{\frac{|\mathrm{Est}_k^G-\|\At\|_F^2|}{\|\At\|_F^2}>\varepsilon\right\}
 \le \frac{\varrho(2,\mathds F)^{d-1}-1}{k\,r_{\mathrm{eff}}(\At)\varepsilon^2}.
\end{equation}
\end{Cor}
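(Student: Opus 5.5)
The plan is to apply Chebyshev's inequality to the sample mean $\mathrm{Est}_k^G$, feeding it the exact variance from Theorem~\ref{thm:ocp-gaussian}. The first step is to record the mean and the second moment.

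Proposition~\ref{Thm:unbiased} gives $\mathbb E(\mathrm{Est}_k^G)=\|\At\|_F^2$. Because $\At\ne0$ satisfies \eqref{orthogonal-cp}, we have $\|\At\|_F^2=\sum_r\lambda_r^2>0$, so the normalized deviation in the statement is well defined.

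Next, finiteness of the variance must be checked. Each summand $Y$ of the estimator is a finite combination $\sum_r\lambda_r^2Y_r$ of products of independent Gamma variables. By \eqref{rank-one-moments} with $p=2$, each $Y_r$ has a finite second moment, so $Y$ is square-integrable and Chebyshev's inequality applies.

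We then apply Chebyshev's inequality at threshold $t=\varepsilon\|\At\|_F^2$:
\[
 \mathbb P\left\{|\mathrm{Est}_k^G-\|\At\|_F^2|>\varepsilon\|\At\|_F^2\right\}
 \le\frac{\operatorname{Var}(\mathrm{Est}_k^G)}{\varepsilon^2\|\At\|_F^4}.
\]
Substituting the exact relative variance \eqref{ocp-gaussian-relative} gives \eqref{effective-rank-chebyshev} directly.

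No step here is a real obstacle. The only points needing care are that the strict inequality in the event is still covered by Chebyshev's inequality (which bounds the event $\ge t$, a superset), and that independence across the $k$ trials is already built into \eqref{ocp-gaussian-relative}.
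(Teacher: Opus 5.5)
Your proposal is correct and matches the paper's proof, which simply applies Chebyshev's inequality to the exact relative variance \eqref{ocp-gaussian-relative} from Theorem~\ref{thm:ocp-gaussian}. Your extra checks (unbiasedness, a finite second moment, and the strict versus non-strict event) are valid details that the paper leaves implicit.
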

\begin{proof}
Apply Chebyshev's inequality to the exact variance
\eqref{ocp-gaussian-relative}.
\end{proof}


\begin{Cor}[Effective rank under component truncation]\label{cor:ocp-truncation}
Under \eqref{orthogonal-cp}, let $S\subseteq\{1,\ldots,R\}$ and
$\At_S=\sum_{r\in S}\lambda_r\uv_{1r}\circ\cdots\circ\uv_{dr}\ne0$.
Then its relative variance is
\[
 \frac{\operatorname{Var}(\mathrm{Est}_k^G(\At_S))}{\|\At_S\|_F^4}
 =\frac{\varrho(2,\mathds F)^{d-1}-1}{k}
 \left(\frac{(\sum_{r\in S}\lambda_r^2)^2}{\sum_{r\in S}\lambda_r^4}\right)^{-1}.
\]
In particular, with $R$ equal weights this is proportional to $1/R$;
removing orthogonal components of negligible squared weight does not
materially change the variance coefficient.
\end{Cor}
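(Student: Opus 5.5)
The plan is to observe that $\At_S$ is itself an orthogonal CP tensor of the form \eqref{orthogonal-cp}, and then read the identity off Theorem~\ref{thm:ocp-gaussian} and Definition~\ref{def:ocp-effective-rank}.

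\emph{Hypotheses.} Reindex the components of $\At_S$ by $S$, with $|S|$ in place of $R$. For each mode $j$, the vectors $\{\uv_{jr}\}_{r\in S}$ are a subfamily of an orthonormal family, so they are orthonormal. The weights $\lambda_r$ with $r\in S$ are nonzero by \eqref{orthogonal-cp}. We also need $S\neq\emptyset$, which follows from $\At_S\ne0$. Hence all assumptions of Theorem~\ref{thm:ocp-gaussian} hold for $\At_S$.

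\emph{Applying the theorem.} Theorem~\ref{thm:ocp-gaussian} gives
$\operatorname{Var}(Y)=(\varrho(2,\mathds F)^{d-1}-1)\sum_{r\in S}\lambda_r^4$.
Orthogonality of the retained components gives $\|\At_S\|_F^2=\sum_{r\in S}\lambda_r^2$. Dividing by $k\|\At_S\|_F^4$ yields the displayed formula, because the bracketed ratio is exactly $r_{\mathrm{eff}}(\At_S)$ in \eqref{effective-rank}.

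\emph{Equal-weight remark.} If $|\lambda_r|$ is constant on $S$, the ratio equals $|S|$ by Proposition~\ref{prop:effective-rank-properties}. With $S$ equal to the full index set, this gives proportionality to $1/R$.

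\emph{Negligible-weight remark.} Suppose the removed set $T=\{1,\ldots,R\}\setminus S$ satisfies $\sum_{r\in T}\lambda_r^2\le\eta\sum_r\lambda_r^2$ for small $\eta$. Then the numerator $(\sum\lambda_r^2)^2$ changes by a relative factor of at least $(1-\eta)^2$. The denominator $\sum\lambda_r^4$ loses $\sum_{T}\lambda_r^4$. Bound this by $\max_T\lambda_r^2\cdot\sum_T\lambda_r^2\le\eta\|\At\|_F^2\max_r\lambda_r^2$, which is small relative to $\sum\lambda_r^4\ge\max_r\lambda_r^4$ whenever $\eta\|\At\|_F^2\ll\max_r\lambda_r^2$.

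The only step needing care is this informal second remark: "negligible" has to be quantified as above so that both the numerator and the denominator of $r_{\mathrm{eff}}$ change by factors $1+O(\eta)$. Everything else is a direct specialization of Theorem~\ref{thm:ocp-gaussian}.
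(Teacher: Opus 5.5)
Your proposal is correct and follows the intended route: $\At_S$ is again an orthogonal CP tensor, so applying Theorem~\ref{thm:ocp-gaussian} to it gives the formula directly. The paper states this corollary without a separate proof. Your quantification of the informal second remark is appropriate, and some extra condition is genuinely needed for a two-sided relative statement. For example, dropping one component of energy fraction $\eta$ beside $M\gg\eta^{-2}$ tiny equal components raises $r_{\mathrm{eff}}$ from about $\eta^{-2}$ to $M$, whereas unconditionally one only has $r_{\mathrm{eff}}(\At_S)\ge(1-\eta)^2\,r_{\mathrm{eff}}(\At)$.
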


We illustrate Theorem~\ref{thm:ocp-gaussian} with an order-four
orthogonal CP tensor whose four mode dimensions are $1000$. Set
$R=10$. For each $\alpha$ and each mode $j$, the MATLAB program forms
$U_j=\operatorname{orth}(G_j)$ from an independent
$G_j\in\REAL^{1000\times10}$ with i.i.d. standard Gaussian entries.
The columns $\uv_{j1},\ldots,\uv_{jR}$ of $U_j$ are consequently
orthonormal, so the numerical tensor satisfies \eqref{orthogonal-cp}. Choose
$\lambda_1=1$ and $\lambda_2=\cdots=\lambda_{10}=\alpha$ for
$\alpha\in\{0.1,0.2,\ldots,1\}$. Thus all coefficients are nonzero,
$\|\At\|_F^2=1+9\alpha^2$, and
\begin{equation}\label{effective-rank-example-alpha}
 r_{\mathrm{eff}}(\At)=\frac{(1+9\alpha^2)^2}{1+9\alpha^4},
\end{equation}
which grows from approximately $1.187$ to $10$ over this grid.

For each $\alpha$, we use $200$ common real Gaussian vectors per
contracted mode. Pairing adjacent real vectors yields $100$ complex
probes, each scaled by $1/\sqrt2$; grouping them in fours yields
$50$ quaternion probes, each scaled by $1/2$. The three estimators
therefore use the same $200$ underlying real vectors per mode, although
they average different numbers of samples. We repeat each experiment
$M=500$ times independently and report the sample variance of the
estimates divided by $\|\At\|_F^4$. Theorem~\ref{thm:ocp-gaussian}
predicts the three relative-variance curves
\begin{equation}\label{effective-rank-example-curves}
 \frac{\operatorname{Var}^{\REAL}(\mathrm{Est}_{200}^G)}{\|\At\|_F^4}=\frac{52}{400\,r_{\mathrm{eff}}},\qquad
 \frac{\operatorname{Var}^{\COPX}(\mathrm{Est}_{100}^G)}{\|\At\|_F^4}=\frac{28}{400\,r_{\mathrm{eff}}},\qquad
 \frac{\operatorname{Var}^{\QUAN}(\mathrm{Est}_{50}^G)}{\|\At\|_F^4}=\frac{19}{400\,r_{\mathrm{eff}}}.
\end{equation}

\begin{figure}[tb]
\centering
\includegraphics[width=0.82\textwidth]{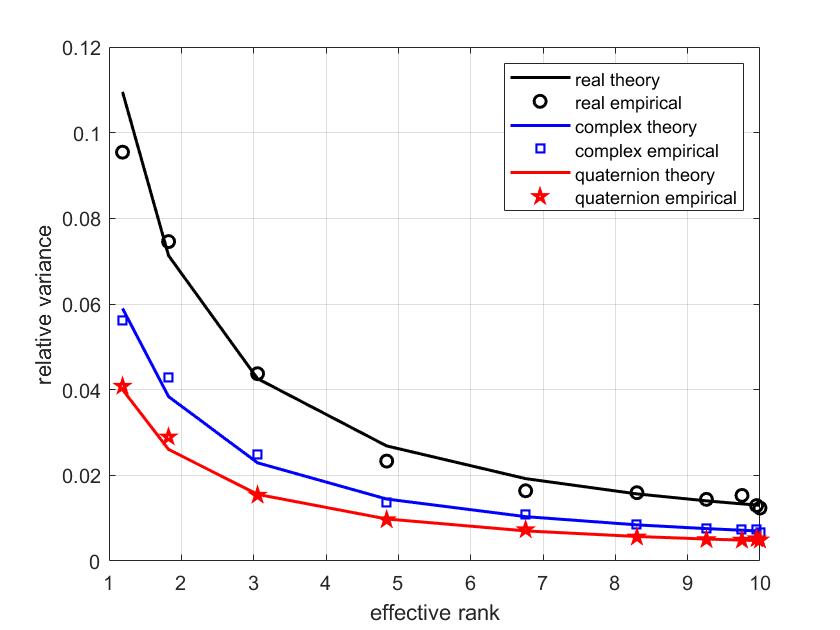}
\caption{Relative variance versus effective rank for the
 4th order orthogonal CP tensors with $R=10$ and
orthonormal factor columns in every mode. Solid curves are the exact
predictions in \eqref{effective-rank-example-curves}; markers are
sample variances from $500$ independent repetitions. The real,
complex, and quaternion estimators share the same underlying real
Gaussian draws in each repetition.}
\label{fig:effective-rank-variance}
\end{figure}

Figure~\ref{fig:effective-rank-variance} shows an approximately
$1/r_{\mathrm{eff}}$ decrease for each field, as predicted by
Theorem~\ref{thm:ocp-gaussian}. At $r_{\mathrm{eff}}=10$, the exact
relative variances are $0.013$, $0.007$, and $0.00475$ for the
real, complex, and quaternion estimators, respectively. The empirical
markers track the corresponding curves, with finite-repetition
fluctuations. Their ordering also agrees with
\eqref{effective-rank-example-curves}: under this fixed real-randomness
budget, quaternion probes have the smallest variance. These results
provide numerical support for the exact effective-rank dependence
and field constants in Theorem~\ref{thm:ocp-gaussian}.

\section{General tensors: sharp moments and concentration}\label{sec:general-main}

We now establish uniform guarantees without CP or rank assumptions.

\begin{Lem}[Gaussian matrix moment bound]\label{Lem:NR1}
  Let $\xv\sim\mathsf N_{\mathds F}(\mathbf0,\Id_n)$ and let $\Qm\in\mathds F^{m\times n}$ be fixed. For every integer $p\ge1$,
\begin{equation}\label{gaussian-matrix-moment}
 \mathbb E\|\Qm\xv\|_2^{2p}
 \le\varrho(p,\mathds F)\|\Qm\|_F^{2p}.
\end{equation}
Equality holds when $p=1$ or $\operatorname{rank}(\Qm)\le1$. If $p>1$ and $\operatorname{rank}(\Qm)>1$, the inequality is strict.
\end{Lem}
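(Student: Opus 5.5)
The plan is to diagonalize $\Qm$ and then apply Minkowski's inequality in $L^p$ to reduce everything to the scalar Gamma moments of Lemma~\ref{lem:scalar-gamma}.

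\emph{Step 1 (reduction to a weighted sum of Gamma variables).} Take a singular value decomposition $\Qm=\Um\Sigmam\Vm^H$ over $\mathds F$. For $\QUAN$ this is the quaternion SVD, with $\Um,\Vm$ quaternion unitary and $\Sigmam$ real nonnegative diagonal. Since $\Um$ preserves the Euclidean norm,
\[
\|\Qm\xv\|_2^2=\|\Sigmam\Vm^H\xv\|_2^2=\sum_{r}\sigma_r^2|g_r|^2,\qquad \gv:=\Vm^H\xv .
\]
By the invariance statement in the proof of Lemma~\ref{Lem:3.1}, $\gv$ has the same law as $\xv$. Here one should check that $\xv\mapsto\Vm^H\xv$, with quaternion scalars multiplied on the left, is a real orthogonal map on $\REAL^{4n}$; I will verify this explicitly. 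Consequently the $g_r$ are independent standard $\mathds F$-Gaussians. By Lemma~\ref{lem:scalar-gamma}, $Z_r:=|g_r|^2$ are i.i.d.\ $\operatorname{Gamma}(\beta_{\mathds F}/2,2/\beta_{\mathds F})$ with $\mathbb EZ_r^p=\varrho(p,\mathds F)$. Set $w_r=\sigma_r^2\ge0$, so that $\sum_r w_r=\|\Qm\|_F^2$.

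\emph{Step 2 (Minkowski).} For $p\ge1$ the triangle inequality in $L^p$ gives
\[
\bigl(\mathbb E(\textstyle\sum_r w_rZ_r)^p\bigr)^{1/p}\le\sum_r w_r(\mathbb EZ_r^p)^{1/p}=\varrho(p,\mathds F)^{1/p}\|\Qm\|_F^2 .
\]
Raising both sides to the $p$-th power yields \eqref{gaussian-matrix-moment}.

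\emph{Step 3 (equality cases).} For $p=1$, linearity of expectation gives equality directly, since $\mathbb EZ_r=1$. For $\rank(\Qm)\le1$ there is at most one nonzero $w_r$, and the sum is $w_1Z_1$, so equality is immediate; the case $\Qm=0$ is trivial.

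For $p>1$ and $\rank(\Qm)\ge2$, equality in Minkowski's inequality with $p>1$ forces the nonzero summands $w_rZ_r$ to be a.s.\ nonnegative multiples of a common variable. This would require $Z_1/Z_2$ to be a.s.\ constant, which is impossible for independent nondegenerate continuous variables. Hence the inequality is strict.

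As a cross-check, or as a fallback if the Minkowski equality characterization is considered delicate, I would use convexity. The map $w\mapsto\mathbb E(\sum_rw_rZ_r)^p$ is strictly convex along non-degenerate directions when $p>1$. Its maxima over the simplex $\{w\ge0,\ \sum_r w_r=1\}$ are therefore attained only at vertices, where the value is $\varrho(p,\mathds F)$.

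\emph{Expected obstacle.} The main point needing care is Step 1 in the quaternion case. Both the existence of the quaternion SVD and the unitary invariance of $\xv$ must respect the left-multiplication convention, so that $\gv=\Vm^H\xv$ really has i.i.d.\ standard quaternion Gaussian entries. I will justify this by writing $\Vm^H$ as a real $4n\times4n$ orthogonal matrix acting on the stacked real components $(\qv_0,\ldots,\qv_3)$ from \eqref{quat-rv}. The law of an isotropic real Gaussian is invariant under that orthogonal matrix.
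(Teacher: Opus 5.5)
Your proof is correct and follows the paper's argument. An SVD together with unitary invariance of the $\mathds F$-Gaussian reduces $\|\Qm\xv\|_2^2$ to $\sum_r\sigma_r^2Z_r$ with i.i.d.\ Gamma variables $Z_r$, and a convexity inequality then gives the bound and its equality cases. You close with Minkowski's inequality in $L^p$ (and handle its equality case correctly), whereas the paper uses the pointwise Jensen bound $(\sum_\ell w_\ell Z_\ell)^p\le\sum_\ell w_\ell Z_\ell^p$ with normalized weights and strict convexity of $t\mapsto t^p$; since all $Z_r$ have the same moment $\varrho(p,\mathds F)$, both give the identical bound.
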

\begin{proof}
  The assertion is immediate for $\Qm=\mathbf0$. Otherwise, let $\sigma_1,\ldots,\sigma_r$ be its nonzero singular values and set
\begin{equation*}
 w_\ell=\frac{\sigma_\ell^2}{\|\Qm\|_F^2},
 \qquad
 Z_\ell=|\langle\vv_\ell,\xv\rangle|^2.
\end{equation*}
Gaussian unitary invariance, including the quaternion case
explained after Lemma~\ref{Lem:3.1}, shows that the right singular
factor maps $\xv$ to a standard $\mathds F$-Gaussian vector.
Hence the $Z_\ell$ are independent copies of $|g|^2$, and
\begin{equation*}
 \|\Qm\xv\|_2^2
 =\|\Qm\|_F^2\sum_{\ell=1}^r w_\ell Z_\ell,
 \qquad \sum_{\ell=1}^r w_\ell=1.
\end{equation*}
The convexity of $t\mapsto t^p$ $(t\ge0)$ and Lemma~\ref{lem:scalar-gamma} give
\begin{equation*}
 \mathbb E\|\Qm\xv\|_2^{2p}
 \le\|\Qm\|_F^{2p}
 \sum_{\ell=1}^r w_\ell\mathbb EZ_\ell^p
 =\varrho(p,\mathds F)\|\Qm\|_F^{2p}.
\end{equation*}
The equality statements follow from linearity when $p=1$ and strict convexity when $p>1$.
\end{proof}

\begin{Thm}[Uniform variance bound]\label{Thm:Var}
  Let $\At\in\REAL^{n_1\times\cdots\times n_d}$ and let $\mathrm{Est}_k^G$ be defined by \eqref{tensor-estimator}. Then
\begin{equation}\label{uniform-variance-bound}
 \operatorname{Var}(\mathrm{Est}_k^G)
 \le\frac{\varrho(2,\mathds F)^{d-1}-1}{k}\|\At\|_F^4,
\end{equation}
where $\varrho(2,\REAL)=3$, $\varrho(2,\COPX)=2$, and $\varrho(2,\QUAN)=3/2$.
\end{Thm}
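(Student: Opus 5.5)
Since $\mathrm{Est}_k^G$ is an average of $k$ i.i.d. copies of $Y=\|(\cdots(\At\times_d\xv_d)\cdots)\times_2\xv_2\|_2^2$, and $\mathbb EY=\|\At\|_F^2$ by Proposition~\ref{Thm:unbiased}, it suffices to prove the second-moment bound $\mathbb EY^2\le\varrho(2,\mathds F)^{d-1}\|\At\|_F^4$. Then $\operatorname{Var}(Y)=\mathbb EY^2-\|\At\|_F^4$, and dividing by $k$ gives \eqref{uniform-variance-bound}. The natural route is induction on the number of contracted modes. We peel off one probe at a time with Lemma~\ref{Lem:NR1} at $p=2$, exactly mirroring the conditioning order used in the proof of Proposition~\ref{Thm:unbiased}.

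Define the partially contracted tensors $\Bt_{d+1}:=\At$ and $\Bt_j:=\Bt_{j+1}\times_j\xv_j$ for $j=d,\ldots,2$, so that $Y=\|\Bt_2\|_F^2$. For a fixed (possibly quaternion-valued) tensor $\Bt_{j+1}$, contraction with $\xv_j$ is the linear map $\xv_j\mapsto\Qm_j\xv_j$, where $\Qm_j$ is a suitable unfolding of $\Bt_{j+1}$ with $\|\Qm_j\|_F=\|\Bt_{j+1}\|_F$. Conditioning on $\xv_{j+1},\ldots,\xv_d$ and applying Lemma~\ref{Lem:NR1} with $p=2$ gives
\[
\mathbb E\bigl[\|\Bt_j\|_F^4\,\big|\,\xv_{j+1},\ldots,\xv_d\bigr]\le\varrho(2,\mathds F)\,\|\Bt_{j+1}\|_F^4 .
\]
The tower property, applied first for $j=2$, then $j=3$, and so on up to $j=d$, yields $\mathbb EY^2\le\varrho(2,\mathds F)^{d-1}\|\At\|_F^4$. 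The stated numerical values of $\varrho(2,\mathds F)$ come from \eqref{rho-definition}, or equivalently $\mathbb E|g|^4=1+2/\beta_{\mathds F}$ in Lemma~\ref{lem:scalar-gamma}.

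The main obstacle is the quaternion case, where we must check that the contraction really is a left matrix action $\Qm_j\xv_j$. The multiplication order is fixed as $(x_d)_{i_d}\cdots(x_2)_{i_2}$. At the step contracting $\xv_j$, the entries of $\Bt_{j+1}$ already carry factors from $\xv_d,\ldots,\xv_{j+1}$ on the left, and the new factor $(x_j)_{i_j}$ multiplies on the right. Thus each output entry has the form $\sum_{i}b_{\alpha i}(x_j)_i$ with quaternion coefficients $b_{\alpha i}$ on the left, so the output is $\Qm_j\xv_j$ with $\Qm_j\in\QUAN^{m\times n_j}$. Lemma~\ref{Lem:NR1} is stated for $\Qm\in\mathds F^{m\times n}$, and its proof relies on the quaternion SVD together with the unitary invariance from Lemma~\ref{Lem:3.1}.

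I would verify two points explicitly. First, in the quaternion case $\|\Qm\xv\|_2^2=\sum_\ell\sigma_\ell^2|\langle\vv_\ell,\xv\rangle|^2$, with the coordinates $\langle\vv_\ell,\xv\rangle=\vv_\ell^H\xv$ written with $\vv_\ell^H$ acting on the left, consistent with the left action above. Second, independence: $\Qm_j$ depends only on $\xv_{j+1},\ldots,\xv_d$, so it is independent of $\xv_j$ and the conditional application of the lemma is valid. If these hold, the induction closes. For a rank-one $\At$, every $\Qm_j$ has rank at most one, so equality holds at each step, consistent with Theorem~\ref{thm:rank-one-exact}.
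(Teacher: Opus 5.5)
Your argument is correct and is essentially the paper's proof. The paper also reduces to bounding $\mathbb EY^2$ and integrates out $\xv_2,\xv_3,\ldots,\xv_d$ in that order, conditioning and applying Lemma~\ref{Lem:NR1} with $p=2$ at each mode without commuting quaternion factors. The one difference is a preliminary step you do not need: the paper first takes a real SVD of $\Am_{(1)}$ and uses Jensen's inequality to reduce to the real singular vectors $\vv_\ell$ before peeling. Applying the lemma directly to the $n_1\times n_2$ matrix $\Qm_2$ makes that step unnecessary, and both versions still need Lemma~\ref{Lem:NR1} for genuinely quaternion matrices once the first probe has been contracted.
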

\begin{proof}
  It suffices to consider one sample. We use the real mode-$1$
  unfolding and retain the ordered quaternion products specified
  after \eqref{tensor-estimator}. Let
\begin{equation*}
 Y=\|\Am_{(1)}\zv\|_2^2,
 \qquad
 \zv=\xv_d\otimes\cdots\otimes\xv_2,
\end{equation*}
and write a real compact singular value decomposition
$\Am_{(1)}=\Um\operatorname{diag}(\sigma_1,\ldots,\sigma_r)[\vv_1,\dots,\vv_r]^T$.
With $\lambda_\ell=\sigma_\ell^2/\|\At\|_F^2$,
\begin{equation*}
 \frac{Y}{\|\At\|_F^2}
 =\sum_{\ell=1}^r\lambda_\ell
 |\langle\vv_\ell,\zv\rangle|^2,
 \qquad \sum_{\ell=1}^r\lambda_\ell=1.
\end{equation*}
Since the vectors $\vv_\ell$ have real entries, reshape each
as an order-$(d-1)$ real tensor. Condition first on the probes in
modes $3,\ldots,d$ and apply Lemma~\ref{Lem:NR1} to the remaining
mode-$2$ linear contraction. Repeat for modes $3,\ldots,d$ in order.
No quaternion factors are interchanged. This gives
\begin{equation}\label{kronecker-fourth-bound}
 \mathbb E|\langle\vv_\ell,\zv\rangle|^4
 \le\varrho(2,\mathds F)^{d-1}.
\end{equation}
Jensen's inequality and Proposition~\ref{Thm:unbiased} therefore yield
\begin{equation*}
 \mathbb EY^2
 \le\|\At\|_F^4\sum_{\ell=1}^r\lambda_\ell
 \mathbb E|\langle\vv_\ell,\zv\rangle|^4
 \le\varrho(2,\mathds F)^{d-1}\|\At\|_F^4.
\end{equation*}
Thus,
\begin{equation*}
 \operatorname{Var}(Y)
 \le\bigl(\varrho(2,\mathds F)^{d-1}-1\bigr)\|\At\|_F^4.
\end{equation*}
Averaging $k$ independent copies divides the variance by $k$.
\end{proof}

Real Kronecker-probe bounds are studied in \cite{BK21}, and
real and complex Kronecker trace-estimation variances in
\cite{MA26}. The bound here is stated for the common normalization
of all three algebras and includes quaternion Gaussian probes.
The rank-one result identifies a tensor attaining each uniform
constant.

\begin{Cor}[Sharp Gaussian moments and variance gaps]
\label{cor:sharpness}\label{cor:general-moments}
Let $\At\in\REAL^{n_1\times\cdots\times n_d}$ and let $Y$ be one sample
from \eqref{tensor-estimator}. For every
integer $p\ge1$,
\begin{equation}\label{general-moments}
 \mathbb E Y^p\le\varrho(p,\mathds F)^{d-1}\|\At\|_F^{2p}.
\end{equation}
Equality holds for every nonzero rank-one tensor. 
Consequently, the sharp worst-case relative-variance factors are
$3^{d-1}-1$, $2^{d-1}-1$, and $(3/2)^{d-1}-1$ over
$\REAL$, $\COPX$, and $\QUAN$, respectively. 
\end{Cor}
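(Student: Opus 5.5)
The plan is to rerun the proof of Theorem~\ref{Thm:Var} with the exponent $2$ replaced by a general integer $p\ge1$, and then read off sharpness from Theorem~\ref{thm:rank-one-exact}. Write one sample as $Y=\|\Am_{(1)}\zv\|_2^2$ with $\zv=\xv_d\otimes\cdots\otimes\xv_2$. Take the real compact SVD of $\Am_{(1)}$ with weights $\lambda_\ell=\sigma_\ell^2/\|\At\|_F^2$, which sum to one. This gives
\[
 \frac{Y}{\|\At\|_F^2}=\sum_{\ell=1}^r\lambda_\ell|\langle\vv_\ell,\zv\rangle|^2 .
\]
Since $t\mapsto t^p$ is convex on $[0,\infty)$, Jensen's inequality yields
\[
 \mathbb E Y^p\le\|\At\|_F^{2p}\sum_\ell\lambda_\ell\,\mathbb E|\langle\vv_\ell,\zv\rangle|^{2p} .
\]
The whole task therefore reduces to a single bound: for every real unit vector $\vv$, reshaped as a real order-$(d-1)$ tensor $\mathcal V$ of unit Frobenius norm, we need $\mathbb E|\langle\vv,\zv\rangle|^{2p}\le\varrho(p,\mathds F)^{d-1}$.

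I would prove that bound by induction on the number of contracted modes, integrating the probes in the order used in Proposition~\ref{Thm:unbiased}. Condition on $\xv_3,\dots,\xv_d$. The partially contracted object $\mathcal V\times_d\xv_d\cdots\times_3\xv_3$ is then a fixed row vector $\qv^T\in\mathds F^{1\times n_2}$, and the scalar is $\qv^T\xv_2$ with the quaternion factors in their prescribed order. Lemma~\ref{Lem:NR1} applied with $\Qm=\qv^T$ gives
\[
 \mathbb E_{\xv_2}|\qv^T\xv_2|^{2p}\le\varrho(p,\mathds F)\|\qv\|_2^{2p} .
\]
Here $\qv^T$ is rank one, so this is in fact an equality. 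Now $\|\qv\|_2^2=\|\mathcal W\times_3\xv_3\|_F^2$, where $\mathcal W=\mathcal V\times_d\xv_d\cdots\times_4\xv_4$ is viewed as an $n_2\times n_3$-type unfolding, that is, as a linear map in $\xv_3$ with output in $\mathds F^{n_2}$. Applying Lemma~\ref{Lem:NR1} again conditionally gives $\mathbb E_{\xv_3}\|\qv\|_2^{2p}\le\varrho(p,\mathds F)\|\mathcal W\|_F^{2p}$. Repeating this through $\xv_d$ and using $\|\mathcal V\|_F=1$ yields the product $\varrho(p,\mathds F)^{d-1}$, which proves \eqref{general-moments}.

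The main obstacle is justifying each conditional step in the quaternion case. At stage $j$ the map $\xv_j\mapsto(\text{fixed quaternion tensor})\times_j\xv_j$ must be a genuine $\mathds F$-linear map $\Qm\xv_j$ with $\Qm$ a fixed quaternion matrix, so that Lemma~\ref{Lem:NR1} (and its unitary-invariance argument) applies. The fixed ordering from \eqref{tensor-estimator} places $(x_d)_{i_d}\cdots(x_3)_{i_3}$ to the left of $(x_2)_{i_2}$. The entries of $\mathcal V$ are real, so they commute with everything. Hence each successive probe enters as right multiplication by $\xv_j$ of a fixed quaternion coefficient array, which is a left-linear form $\Qm\xv_j$ as required. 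I would verify this ordering explicitly, since the argument is only valid if no quaternion factors have to be commuted.

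Equality for nonzero rank-one tensors follows directly from \eqref{rank-one-moments} at integer $p$, where the right-hand side equals $\varrho(p,\mathds F)^{d-1}$. Taking $p=2$ and subtracting $(\mathbb EY)^2=\|\At\|_F^4$ recovers Theorem~\ref{Thm:Var}, and the rank-one case attains it with equality. The worst-case relative-variance factor is therefore exactly $\varrho(2,\mathds F)^{d-1}-1$, which equals $3^{d-1}-1$, $2^{d-1}-1$, and $(3/2)^{d-1}-1$ for $\REAL$, $\COPX$, and $\QUAN$ respectively.
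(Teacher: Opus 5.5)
Your proposal is correct and follows the paper's proof essentially step for step. Both arguments apply pointwise convexity to the real SVD weights, then condition successively on the probes and apply Lemma~\ref{Lem:NR1} with exponent $p$ in each contracted mode, keeping the quaternion order intact. Both then read off rank-one equality and the variance factors from \eqref{rank-one-moments} at $p=2$. The only omission is the trivial case $\At=0$, which the paper treats separately because the weights $\lambda_\ell=\sigma_\ell^2/\|\At\|_F^2$ are then undefined.
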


\begin{proof}
The claim is immediate when $\At=0$. For $\At\ne0$, use the real singular vectors and weights
$\lambda_\ell$ from the proof of Theorem~\ref{Thm:Var}.
Convexity gives, pointwise,
\[
 \left(\frac{Y}{\|\At\|_F^2}\right)^p
 =\left(\sum_{\ell=1}^r\lambda_\ell
       |\langle\vv_\ell,\zv\rangle|^2\right)^p
 \le\sum_{\ell=1}^r\lambda_\ell
       |\langle\vv_\ell,\zv\rangle|^{2p}.
\]
For each real $\vv_\ell$, condition successively on the independent
probes and apply Lemma~\ref{Lem:NR1} with exponent $p$ in every
contracted mode. Hence
$\mathbb E|\langle\vv_\ell,\zv\rangle|^{2p}
\le\varrho(p,\mathds F)^{d-1}\|\vv_\ell\|_2^{2p}
=\varrho(p,\mathds F)^{d-1}$, proving \eqref{general-moments}.
For rank one, \eqref{rank-one-moments} gives equality for every $p$.
At $p=2$, subtracting $(\mathbb EY)^2=\|\At\|_F^4$ yields the displayed
worst-case variance factors, attained by rank-one tensors.
\end{proof}

\begin{Cor}[General-tensor two-sided Chebyshev bound]
\label{cor:tail_bound}
Let $\At\ne0$. For every $\varepsilon>0$,
\[
 \mathbb P\left\{\frac{\left|\mathrm{Est}_k^G-\|\At\|_F^2\right|}{\|\At\|_F^2}>\varepsilon\right\}
 \le\frac{\varrho(2,\mathds F)^{d-1}-1}{k\varepsilon^2}.
\]
\end{Cor}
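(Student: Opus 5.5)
This follows directly from Chebyshev's inequality applied to the sample mean, using the mean and variance information already in hand. By Proposition~\ref{Thm:unbiased}, $\mathbb E(\mathrm{Est}_k^G)=\|\At\|_F^2$. By Theorem~\ref{Thm:Var}, which holds for every real tensor without rank or orthogonality assumptions,
\[
\operatorname{Var}(\mathrm{Est}_k^G)\le\frac{\varrho(2,\mathds F)^{d-1}-1}{k}\|\At\|_F^4 .
\]
The variance is finite here: Corollary~\ref{cor:general-moments} with $p=2$ gives $\mathbb EY^2<\infty$, so Chebyshev's inequality applies.

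Since $\At\ne0$, the threshold $t:=\varepsilon\|\At\|_F^2$ is positive. The event in the statement is
\[
\bigl\{|\mathrm{Est}_k^G-\mathbb E\,\mathrm{Est}_k^G|>t\bigr\}.
\]
Chebyshev's inequality bounds its probability by $\operatorname{Var}(\mathrm{Est}_k^G)/t^2$. Inserting the variance bound and $t^2=\varepsilon^2\|\At\|_F^4$, the factors $\|\At\|_F^4$ cancel. This leaves exactly $(\varrho(2,\mathds F)^{d-1}-1)/(k\varepsilon^2)$.

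There is no real obstacle. The only points needing care are the hypothesis $\At\ne0$, which makes the relative error well defined, and the fact that the bound is two-sided because Chebyshev controls $|\cdot|$ directly. One could also remark that the bound is vacuous unless $k\varepsilon^2>\varrho(2,\mathds F)^{d-1}-1$. This motivates the block-median refinement mentioned in the introduction, but it is not needed for the proof.
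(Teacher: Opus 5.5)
Your proposal is correct and matches the paper's approach: the paper states this corollary without a separate proof, as an immediate consequence of Proposition~\ref{Thm:unbiased}, the uniform variance bound of Theorem~\ref{Thm:Var}, and Chebyshev's inequality at threshold $\varepsilon\|\At\|_F^2$. The paper uses the same one-line step for Corollary~\ref{cor:effective-rank-bounds} and inside the proof of Theorem~\ref{thm:median-blocks}.
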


The preceding Chebyshev bound gives a two-sided failure probability
of order $1/k$, but does not by itself yield exponential confidence
for the ordinary sample mean. This distinction matters for
multilinear Gaussian probes: already for a rank-one tensor,
\eqref{rank-one-distribution} expresses one sample as a product of
$d-1$ independent squared Gaussian magnitudes. When $d\ge3$,
its positive moment generating function is infinite, as also
suggested by the heavy-tail behavior of Gaussian chaoses
\cite{La06}. Thus a direct two-sided Chernoff bound for the
unmodified mean is unavailable in general \cite{BK21}. A classical way to
amplify a constant-probability Chebyshev estimate is the
median-of-means construction \cite{BubeckCesaBianchiLugosi2013}.
We apply this standard principle to the tensor estimator using the
variance bound of Theorem~\ref{Thm:Var}.

\begin{Thm}[Two-sided exponential confidence for general tensors]
\label{thm:median-blocks}
Let $\At\ne0$ be any real order-$d$ tensor, and set
$\mu=\|\At\|_F^2$. Let $Y$ denote one independent sample from
\eqref{tensor-estimator}, and suppose that a known constant
$V\ge\operatorname{Var}(Y)/\mu^2$ is available. Given
$\varepsilon>0$, set
\[
 b=\max\{1,\lceil4V/\varepsilon^2\rceil\}
\]
and choose an odd integer $t\ge1$. Draw $bt$ independent copies
$Y_{j\ell}$ of $Y$, form the block means
\[
 B_j=\frac1b\sum_{\ell=1}^{b}Y_{j\ell},
 \qquad j=1,\ldots,t,
\]
and let $\widehat\mu_{b,t}=\operatorname{median}(B_1,\ldots,B_t)$.
Then
\begin{equation}\label{median-block-tail}
 \mathbb P\{|\widehat\mu_{b,t}-\mu|>\varepsilon\mu\}
 \le e^{-t/8}.
\end{equation}
For every tensor, one may take
$V=\varrho(2,\mathds F)^{d-1}-1$ by
Theorem~\ref{Thm:Var}. For an orthogonal CP tensor,
Theorem~\ref{thm:ocp-gaussian} permits the smaller value
$V=[\varrho(2,\mathds F)^{d-1}-1]/r_{\mathrm{eff}}(\At)$
when its effective rank is known.
\end{Thm}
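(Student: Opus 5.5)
The plan is the standard median-of-means argument, applied in three steps: control each block mean by Chebyshev, bound the number of bad blocks by a Hoeffding-type estimate, and conclude that the median is good unless at least half the blocks are bad.

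\textbf{Step 1: a single block.} Each block mean $B_j$ averages $b$ independent copies of $Y$. By Proposition~\ref{Thm:unbiased} it has mean $\mu$, and $\operatorname{Var}(B_j)=\operatorname{Var}(Y)/b\le V\mu^2/b$. Chebyshev's inequality, together with $b\ge 4V/\varepsilon^2$, gives
\[
 \mathbb P\{|B_j-\mu|>\varepsilon\mu\}\le \frac{V}{b\varepsilon^2}\le \frac14 .
\]
If $V=0$, then $Y=\mu$ almost surely and the claim is trivial, so this case needs no separate treatment.

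\textbf{Step 2: the number of bad blocks.} Call block $j$ bad if $|B_j-\mu|>\varepsilon\mu$, and let $I_j$ be its indicator. The blocks use disjoint sets of samples, so $I_1,\ldots,I_t$ are independent Bernoulli variables with success probability $p_j\le 1/4$.

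\textbf{Step 3: the median.} Since $t$ is odd, the median is one of the $B_j$ and at least $(t+1)/2$ of the block means lie on each side of it. Suppose $|\widehat\mu_{b,t}-\mu|>\varepsilon\mu$, say $\widehat\mu_{b,t}>(1+\varepsilon)\mu$. Then at least $(t+1)/2$ block means exceed $(1+\varepsilon)\mu$; the case $\widehat\mu_{b,t}<(1-\varepsilon)\mu$ is symmetric. In either case $\sum_j I_j\ge t/2$. Hoeffding's inequality for bounded independent variables then gives
\[
 \mathbb P\Big\{\sum_j I_j\ge t/2\Big\}\le \mathbb P\Big\{\sum_j (I_j-p_j)\ge t/4\Big\}\le \exp\!\big(-2(t/4)^2/t\big)=e^{-t/8}.
\]
This proves \eqref{median-block-tail}.

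\textbf{The choices of $V$.} The admissible values of $V$ follow from results already proved. Theorem~\ref{Thm:Var} with $k=1$ gives $\operatorname{Var}(Y)/\mu^2\le\varrho(2,\mathds F)^{d-1}-1$ for every tensor. For an orthogonal CP tensor, Theorem~\ref{thm:ocp-gaussian} gives the exact value $\operatorname{Var}(Y)/\mu^2=[\varrho(2,\mathds F)^{d-1}-1]/r_{\mathrm{eff}}(\At)$, which therefore qualifies as $V$.

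The only real care point is the counting in Step 3. With $t$ odd, one must check that a bad median forces at least $t/2$ bad blocks, and the $\max\{1,\cdot\}$ in the definition of $b$ only ensures $b\ge1$. Beyond that, the proof is routine.
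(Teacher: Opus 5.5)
Your proposal is correct and follows the paper's proof essentially step by step. The shared steps are: Chebyshev on each block mean, which gives failure probability at most $1/4$ because $b\ge 4V/\varepsilon^2$; independence of the bad-block indicators; the observation that a bad median forces at least $(t+1)/2$ bad blocks; and Hoeffding with deviation $t/4$, which yields $e^{-t/8}$. The two admissible values of $V$ are then read off from Theorem~\ref{Thm:Var} and Theorem~\ref{thm:ocp-gaussian} with $k=1$.
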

\begin{proof}
Since the samples are independent and identically distributed,
each block mean satisfies
\[
 \mathbb E B_j=\mu,
 \qquad
 \operatorname{Var}(B_j)=\frac{\operatorname{Var}(Y)}{b}
 \le\frac{V\mu^2}{b}.
\]
Therefore, Chebyshev's inequality (Corollary \ref{cor:tail_bound}) and the choice of $b$ give
\begin{equation}\label{median-block-bad-prob}
 q_j:=\mathbb P\{|B_j-\mu|>\varepsilon\mu\}
 \le\frac{V}{b\varepsilon^2}\le\frac14,
 \qquad j=1,\ldots,t.
\end{equation}
Let $I_j=\mathbf1_{\{|B_j-\mu|>\varepsilon\mu\}}$.
The blocks are disjoint and are formed from independent samples,
so the Bernoulli variables $I_1,\ldots,I_t$ are independent.
Moreover, $\mathbb E\sum_{j=1}^t I_j=\sum_jq_j\le t/4$.
If the median lies outside
$[\mu(1-\varepsilon),\mu(1+\varepsilon)]$, at least
$(t+1)/2$ block means lie outside this interval. Hence
\[
 \{|\widehat\mu_{b,t}-\mu|>\varepsilon\mu\}
 \subseteq\left\{\sum_{j=1}^t I_j\ge\frac{t+1}{2}\right\}
 \subseteq\left\{\sum_{j=1}^t(I_j-\mathbb EI_j)
                   \ge\frac{t}{4}\right\}.
\]
Hoeffding's inequality for independent random variables in
$[0,1]$ \cite{Hoeffding1963} now yields
\[
 \mathbb P\left\{\sum_{j=1}^t(I_j-\mathbb EI_j)
                   \ge\frac{t}{4}\right\}
 \le\exp\left(-\frac{2(t/4)^2}{t}\right)=e^{-t/8},
\]
which proves \eqref{median-block-tail}. The two stated choices of
$V$ follow from \eqref{uniform-variance-bound} (with $k=1$)
and \eqref{ocp-gaussian-relative} (with $k=1$), respectively.
\end{proof}

The following algorithm makes the confidence parameters explicit.
For a prescribed $\delta\in(0,1)$, choosing the smallest odd
integer $t\ge8\log(1/\delta)$ in Theorem~\ref{thm:median-blocks}
ensures failure probability at most $\delta$.

\begin{center}
\refstepcounter{algorithm}\label{alg:median-block-norm}
\begin{minipage}{0.94\textwidth}
\noindent\textbf{Algorithm \thealgorithm.} Block-median tensor Frobenius-norm estimation.
\begin{enumerate}
\item \textbf{Input:} a contraction oracle for $\At\ne0$, a probe field
$\mathds F\in\{\REAL,\COPX,\QUAN\}$, accuracy $\varepsilon>0$,
confidence level $\delta\in(0,1)$, and optionally a known valid
relative-variance bound $V$.
\item If no bound is supplied, set
$V\gets\varrho(2,\mathds F)^{d-1}-1$. Set
$b\gets\max\{1,\lceil4V/\varepsilon^2\rceil\}$ and let $t$ be the
smallest odd integer satisfying $t\ge8\log(1/\delta)$.
\item For $j=1,\ldots,t$, draw $b$ independent probe tuples
$(\xv_2^{j\ell},\ldots,\xv_d^{j\ell})$, evaluate
\[
 Y_{j\ell}=\left\|(\cdots(\At\times_d\xv_d^{j\ell})\cdots)
 \times_2\xv_2^{j\ell}\right\|_2^2,
 \quad \ell=1,\ldots,b,
\]
and store $B_j\gets \frac{1}{b}\sum_{\ell=1}^bY_{j\ell}$.
\item \textbf{Return:}
$\widehat\mu\gets\operatorname{median}(B_1,\ldots,B_t)$.
\end{enumerate}
\end{minipage}
\end{center}

Algorithm~\ref{alg:median-block-norm} uses $bt$ independent
contractions and stores only the $t$ block means. Its output
satisfies
$$\mathbb P\{|\widehat\mu-\|\At\|_F^2|>
\varepsilon\|\At\|_F^2\}\le\delta$$ by
Theorem~\ref{thm:median-blocks}. This guarantee concerns the
block median, rather than the unmodified average of all $bt$
samples.

\section{Application to an implicit tensor-kernel problem}\label{Sec:NumExp}

Consider interpolation from values $\bv=(f_1,\ldots,f_N)^T$ at
pairwise distinct scattered sites $x_1,\ldots,x_N\in(0,1)$.
In a generalized Mercer kernel formulation \cite{XY19,CFLY24},
the coefficient vector $\cv$ solves the order-$d$ tensor equation
\begin{equation*}
  \At\cv^{d-1}=\bv, \qquad \text{ with even }d.
\end{equation*}
The tensor has $N^d$
entries but is specified implicitly through an infinite series.
For a concrete sine-series kernel, set
\begin{equation}\label{kernel-series}
 \uv_n:=\bigl(\sin(n\pi x_i)\bigr)_{i=1}^N,\qquad
 \lambda_n:=\frac{2^{d/2}}{\pi^d n^d},\qquad
 \At=\sum_{n=1}^{\infty}\lambda_n\uv_n^{\circ d}.
\end{equation}
For $d=2$, this normalization gives the familiar kernel
$K(x,y)=\min(x,y)-xy$. For a finite computation, we retain the first $p$ terms,
\begin{equation*}
  \At_p=\sum_{n=1}^p\lambda_n\uv_n^{\circ d}
\end{equation*}
and use random probes to estimate $\|\At_p\|_F^2$ without forming $\At_p$. The example illustrates this use of the estimator; we do not study other algorithms for selecting $p$ or interpolating the data.

The factors $\uv_n$ are generally not orthogonal at scattered
sites. Indeed, the exact norm of a finite truncation contains
cross terms,
\begin{equation}\label{kernel-gram-norm}
 \|\At_p\|_F^2
 =\sum_{m=1}^{p}\sum_{n=1}^{p}\lambda_m\lambda_n
   (\uv_m^T\uv_n)^d.
\end{equation}
Consequently, the orthogonal-CP identity of
Theorem~\ref{thm:ocp-gaussian} does not apply to this example;
the uniform bound of Theorem~\ref{Thm:Var} does. Evaluating
\eqref{kernel-gram-norm} avoids forming $N^d$ entries, but involves
all pairs of retained terms. The random-probe method instead needs
only the contractions
\begin{equation}\label{kernel-contraction}
 \zv_p(\xv_2,\ldots,\xv_d)
 :=(\cdots(\At_p\times_d\xv_d)\cdots)\times_2\xv_2
 =\sum_{n=1}^{p}\lambda_n
   (\uv_n^T\xv_d)\cdots(\uv_n^T\xv_2)\uv_n.
\end{equation}
For any fixed $p$ and independent standard probes over
$\mathds F\in\{\REAL,\COPX,\QUAN\}$, the sample mean of
$\|\zv_p\|_2^2$ is unbiased for $\|\At_p\|_F^2$ and obeys
\begin{equation}\label{kernel-fixed-truncation-variance}
 \operatorname{Var}(\mathrm{Est}_k^G(\At_p))
 \le\frac{\varrho(2,\mathds F)^{d-1}-1}{k}
        \|\At_p\|_F^4.
\end{equation}
The same statement applies to each fixed block
$\At_{p+\ell}-\At_p$. These are direct applications of the
general-tensor results in Section~\ref{sec:general-main}.

In our experiment, $d=4$, the block length is $\ell=5$,
and $k=50$ probe tuples are reused as the series is traversed.
For each tuple, the contraction in \eqref{kernel-contraction} can
be updated by adding only the next $\ell$ summands. This yields
two estimates at each step: one for the current partial tensor
and one for the next block. Algorithm~\ref{Alg:RandomTen}
states the empirical stopping rule used in this example.
It uses the estimated norms to select a truncation in this experiment.

\begin{center}
\refstepcounter{algorithm}\label{Alg:RandomTen}
\begin{minipage}{0.94\textwidth}
\noindent\textbf{Algorithm \thealgorithm.} Probe-guided truncation of the kernel series.
\begin{enumerate}
\item \textbf{Input:} the series data in \eqref{kernel-series}, a probe field
$\mathds F$, sample count $k=50$, block length $\ell=5$, and
threshold $\tau=\sqrt{\mathrm{eps}}$, where $\mathrm{eps}$ is machine precision.
\item Draw $k$ independent tuples $(\xv_2^i,\ldots,\xv_d^i)$.
Set $p\gets0$ and $\zv_p^i\gets\mathbf0$ for $i=1,\ldots,k$.
\item For the next block, compute
\[
 \Delta\zv_p^i
 =\sum_{n=p+1}^{p+\ell}\lambda_n
   (\uv_n^T\xv_d^i)\cdots(\uv_n^T\xv_2^i)\uv_n,
 \qquad i=1,\ldots,k.
\]
Set $D_p\gets\bigl(\frac{1}{k}\sum_i\|\Delta\zv_p^i\|_2^2\bigr)^{1/2}$
and $N_p\gets\bigl(\frac{1}{k}\sum_i\|\zv_p^i\|_2^2\bigr)^{1/2}$.
\item If $D_p<\tau\max\{N_p,1\}$, return the implicit truncation
$\At_p=\sum_{n=1}^p\lambda_n\uv_n^{\circ d}$.
Otherwise set $\zv_{p+\ell}^i\gets\zv_p^i+\Delta\zv_p^i$,
$p\gets p+\ell$, and return to Step~3.
\end{enumerate}
\end{minipage}
\end{center}

The same probes evaluate the current and incremental contractions,
so no new probe tuples are required for successive blocks. The
stopping rule tests the next block, not the entire infinite tail;
it is therefore an empirical truncation criterion. The fixed-truncation variance bound alone does not certify a stopping decision made by repeatedly testing the same probes.

\begin{figure}[H]
\centering
\includegraphics[width=0.8\textwidth]{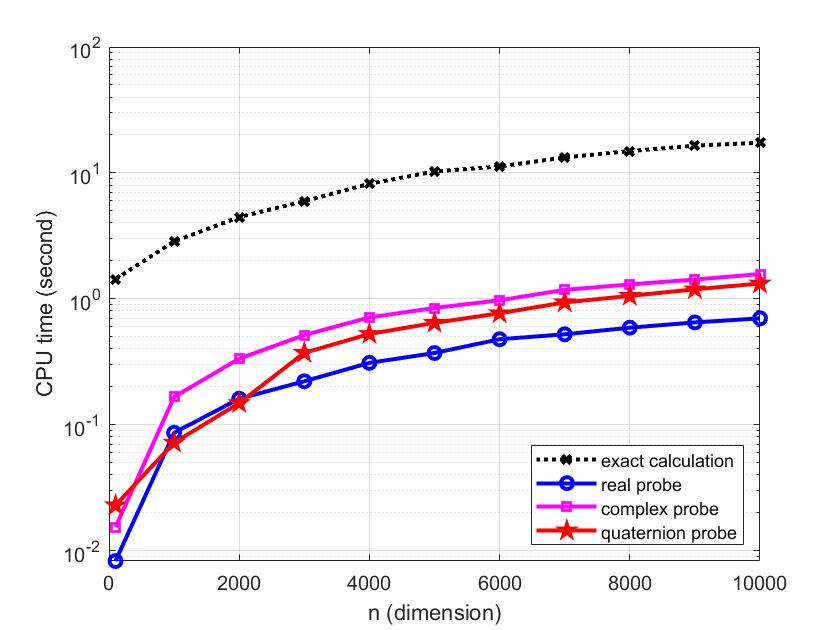}
\caption{CPU time for exact norm evaluation and probe-based norm estimation over $\REAL$, $\COPX$, and $\QUAN$ as the number $N$ of scattered sites increases. The parameters are $k=50$ and $\ell=5$.}
\label{RKBS-vDimen}
\end{figure}

Figure~\ref{RKBS-vDimen} reports the running times of this implementation.
For the dimensions shown, all three probe-based evaluations are faster
than the direct norm calculation in \eqref{kernel-gram-norm}. The real
probe is fastest in this experiment. These timings illustrate that the
implicit tensor can be handled through contractions; they do not compare
different truncation or interpolation methods.

\begin{figure}[H]
\centering
\begin{tabular}{cc}
 \includegraphics[width=0.3\textwidth]{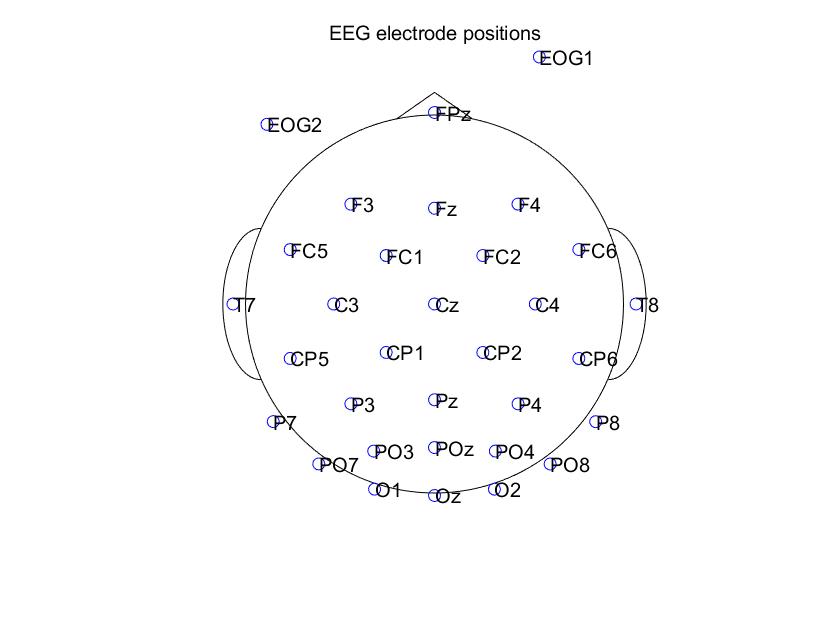} &
 \includegraphics[width=0.3\textwidth]{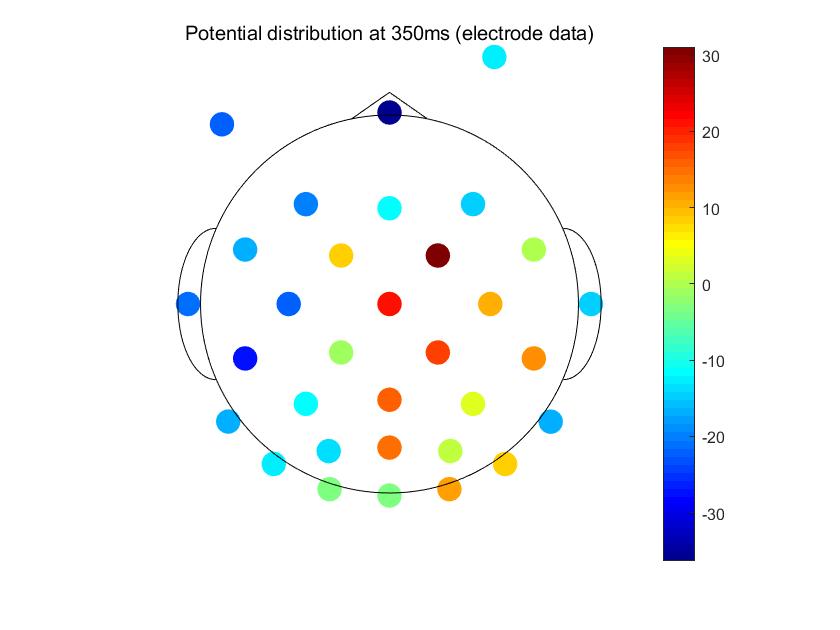} \\
 (a) & (b)
\end{tabular}
\caption{EEG example: (a) electrode locations and (b) measured
potentials at the selected post-stimulus time point.}
\label{EEG:setting}
\end{figure}

\begin{figure}[H]
\centering
\includegraphics[width=\textwidth]{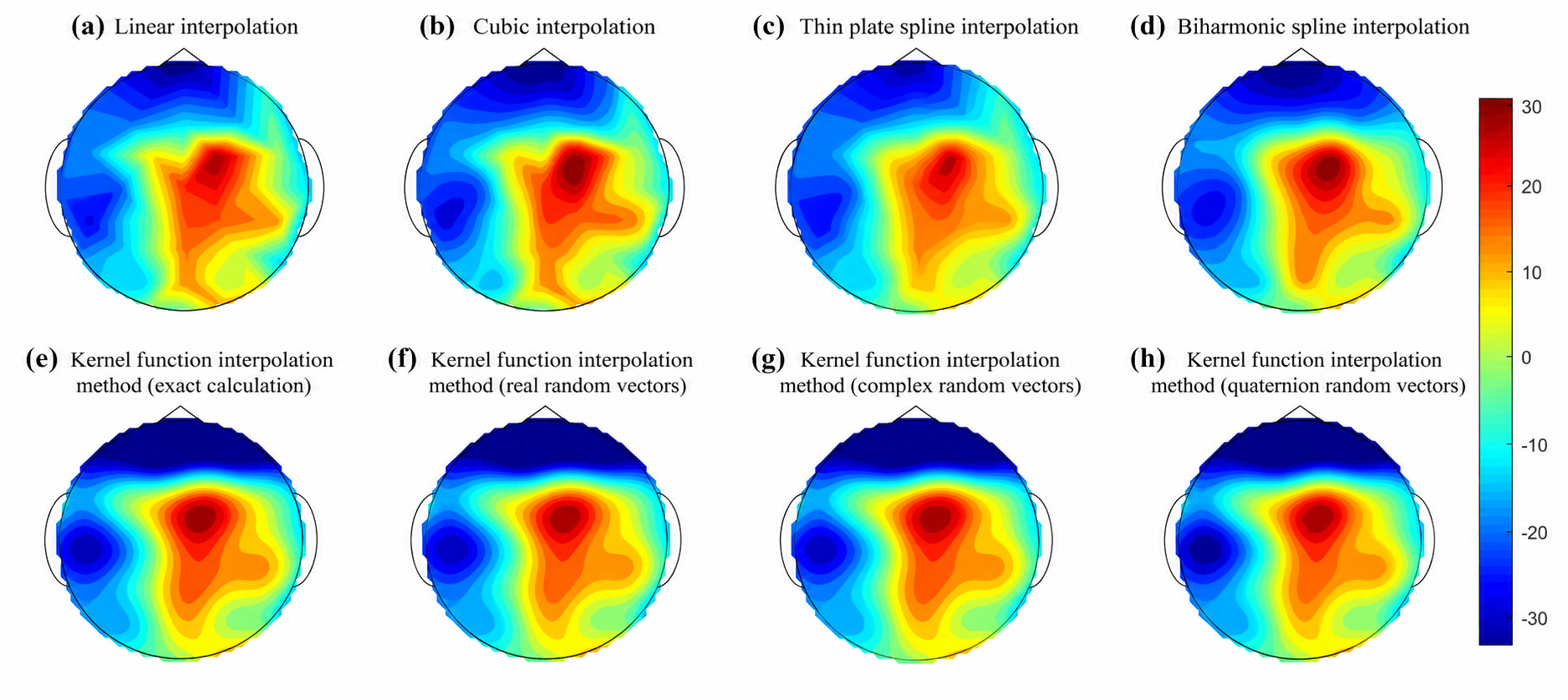}
\caption{Reconstructed scalp-potential maps: (a)--(d) classical
interpolation schemes; (e) tensor-kernel interpolation with an
exact finite-truncation norm; and (f)--(h) the same method using
real, complex, and quaternion norm probes, respectively.}
\label{EEG:reconstr}
\end{figure}

As a related spatial interpolation illustration, we use
scalp potentials recorded at $32$ EEG electrodes and reconstruct a
spatial potential map at one post-stimulus time point. The electrode
locations and measured values are shown in Figure~\ref{EEG:setting}.
The data are processed in MATLAB with EEGLAB \cite{DM04eeglab}.
In the displayed run, the tensor-kernel maps obtained with
exact and probe-based norm evaluations look similar in
Figure~\ref{EEG:reconstr}(e)--(h). These images illustrate the use
of probe-based norm estimates in this interpolation calculation.
We include the other maps for visual context; the images alone do
not measure reconstruction errors or compare interpolation methods.

\section{Conclusion}\label{Sec:Conclude}

We analyzed squared Frobenius-norm estimation for real tensors using independent real, complex, and quaternion Gaussian vectors in successive mode contractions. The exact rank-one distribution explains the sharp worst-case variance factors $3^{d-1}-1$, $2^{d-1}-1$, and $(3/2)^{d-1}-1$. Under equal underlying real randomness, modewise pooling gives quaternion probes a strict rank-one variance advantage over four independently averaged real probes when $d\ge3$. This is a variance comparison at a fixed budget of Gaussian draws.

For orthogonal CP tensors, the participation-ratio effective rank gives the exact variance reduction. For arbitrary real tensors, the moment bound and its rank-one equality establish sharp uniform constants. Chebyshev's inequality controls the ordinary sample mean, while the block-median estimator supplies two-sided exponential confidence from the same finite-variance information. The latter bound uses a standard median-of-means argument with the variance constants established here.

The orthogonal-CP and rank-one experiments are consistent with
the predicted variance formulas within their finite number of
repetitions. The infinite-series example illustrates norm monitoring
through contractions. Its repeated-probe stopping rule is empirical,
and the EEG maps provide a qualitative illustration. Future work
includes certified adaptive stopping, a specified spatial EEG kernel
with measured errors, and comparisons at matched arithmetic cost.

\bibliographystyle{plain}
\bibliography{myreferences}

@article{AT2011,
  author = {Haim Avron and Sivan Toledo},
  title = {Randomized Algorithms for Estimating the Trace of an Implicit Symmetric Positive Semi-Definite Matrix},
  journal = {Journal of the ACM},
  volume = {58},
  number = {2},
  pages = {8:1--8:17},
  year = {2011},
  doi = {10.1145/1944345.1944349}
}

@article{BK08,
  author = {Brett W. Bader and Tamara G. Kolda},
  title = {Efficient {MATLAB} Computations with Sparse and Factored Tensors},
  journal = {SIAM Journal on Scientific Computing},
  volume = {30},
  number = {1},
  pages = {205--231},
  year = {2008},
  doi = {10.1137/060676489}
}

@article{BK21,
  author = {Zvonimir Bujanovi{\'c} and Daniel Kressner},
  title = {Norm and Trace Estimation with Random Rank-One Vectors},
  journal = {SIAM Journal on Matrix Analysis and Applications},
  volume = {42},
  number = {1},
  pages = {202--223},
  year = {2021},
  doi = {10.1137/20M1331718}
}

@article{BubeckCesaBianchiLugosi2013,
  author = {S{\'e}bastien Bubeck and Nicol{\`o} Cesa-Bianchi and G{\'a}bor Lugosi},
  title = {Bandits with Heavy Tail},
  journal = {IEEE Transactions on Information Theory},
  volume = {59},
  number = {11},
  pages = {7711--7717},
  year = {2013},
  doi = {10.1109/TIT.2013.2277869}
}

@book{CB24,
  author = {George Casella and Roger L. Berger},
  title = {Statistical Inference},
  edition = {Second},
  publisher = {Chapman and Hall/CRC},
  year = {2024},
  isbn = {9781032593036}
}

@article{CFLY24,
  author = {Yannan Chen and Kaidong Fu and Can Li and Qi Ye},
  title = {A Scalable Optimization Approach for the Multilinear System Arising from Scattered Data Interpolation},
  journal = {CSIAM Transactions on Applied Mathematics},
  volume = {5},
  number = {3},
  pages = {590--614},
  year = {2024},
  doi = {10.4208/csiam-am.SO-2023-0045}
}

@article{DM04eeglab,
  author = {Arnaud Delorme and Scott Makeig},
  title = {{EEGLAB}: An Open Source Toolbox for Analysis of Single-Trial {EEG} Dynamics Including Independent Component Analysis},
  journal = {Journal of Neuroscience Methods},
  volume = {134},
  number = {1},
  pages = {9--21},
  year = {2004},
  doi = {10.1016/j.jneumeth.2003.10.009}
}

@article{H1990,
  author = {Michael F. Hutchinson},
  title = {A Stochastic Estimator of the Trace of the Influence Matrix for Laplacian Smoothing Splines},
  journal = {Communications in Statistics---Simulation and Computation},
  volume = {19},
  number = {2},
  pages = {433--450},
  year = {1990},
  doi = {10.1080/03610919008812866}
}

@article{Hamilton1844,
  author = {William Rowan Hamilton},
  title = {On Quaternions; or on a New System of Imaginaries in Algebra},
  journal = {The London, Edinburgh, and Dublin Philosophical Magazine and Journal of Science},
  series = {3},
  volume = {25},
  number = {163},
  pages = {10--13},
  year = {1844},
  doi = {10.1080/14786444408644923}
}

@article{Hoeffding1963,
  author = {Wassily Hoeffding},
  title = {Probability Inequalities for Sums of Bounded Random Variables},
  journal = {Journal of the American Statistical Association},
  volume = {58},
  number = {301},
  pages = {13--30},
  year = {1963},
  doi = {10.1080/01621459.1963.10500830}
}

@article{KoldaBader09,
  author = {Tamara G. Kolda and Brett W. Bader},
  title = {Tensor Decompositions and Applications},
  journal = {SIAM Review},
  volume = {51},
  number = {3},
  pages = {455--500},
  year = {2009},
  doi = {10.1137/07070111X}
}

@article{La06,
  author = {Rafa{\l} Lata{\l}a},
  title = {Estimates of Moments and Tails of {G}aussian Chaoses},
  journal = {The Annals of Probability},
  volume = {34},
  number = {6},
  pages = {2315--2331},
  year = {2006},
  doi = {10.1214/009117906000000421}
}

@article{MA26,
  author = {Raphael A. Meyer and Haim Avron},
  title = {{H}utchinson's Estimator Is Bad at {K}ronecker-Trace-Estimation},
  journal = {SIAM Journal on Matrix Analysis and Applications},
  volume = {47},
  number = {1},
  pages = {353--387},
  year = {2026},
  doi = {10.1137/24M1720895}
}

@article{RecanatesiEtAl2022,
  author = {Stefano Recanatesi and Serena Bradde and Vijay Balasubramanian and Nicholas A. Steinmetz and Eric Shea-Brown},
  title = {A Scale-Dependent Measure of System Dimensionality},
  journal = {Patterns},
  volume = {3},
  number = {8},
  pages = {100555},
  year = {2022},
  doi = {10.1016/j.patter.2022.100555}
}

@book{TensBK,
  author = {Grey Ballard and Tamara G. Kolda},
  title = {Tensor Decompositions for Data Science},
  publisher = {Cambridge University Press},
  year = {2025},
  doi = {10.1017/9781009471664}
}

@article{TookMandic2011,
  author = {Clive Cheong Took and Danilo P. Mandic},
  title = {Augmented Second-Order Statistics of Quaternion Random Signals},
  journal = {Signal Processing},
  volume = {91},
  number = {2},
  pages = {214--224},
  year = {2011},
  doi = {10.1016/j.sigpro.2010.06.024}
}

@article{XY19,
  author = {Yuesheng Xu and Qi Ye},
  title = {Generalized {M}ercer Kernels and Reproducing Kernel {B}anach Spaces},
  journal = {Memoirs of the American Mathematical Society},
  volume = {258},
  number = {1243},
  pages = {1--122},
  year = {2019},
  doi = {10.1090/memo/1243}
}

@article{Oseledets11,
  author = {Ivan V. Oseledets},
  title = {Tensor-Train Decomposition},
  journal = {SIAM Journal on Scientific Computing},
  volume = {33},
  number = {5},
  pages = {2295--2317},
  year = {2011},
  doi = {10.1137/090752286}
}

@article{BachmayrDahmen15,
  author = {Markus Bachmayr and Wolfgang Dahmen},
  title = {Adaptive Near-Optimal Rank Tensor Approximation for High-Dimensional Operator Equations},
  journal = {Foundations of Computational Mathematics},
  volume = {15},
  number = {4},
  pages = {839--898},
  year = {2015},
  doi = {10.1007/s10208-013-9187-3}
}

@article{AnandkumarEtAl14,
  author = {Animashree Anandkumar and Rong Ge and Daniel Hsu and Sham M. Kakade and Matus Telgarsky},
  title = {Tensor Decompositions for Learning Latent Variable Models},
  journal = {Journal of Machine Learning Research},
  volume = {15},
  number = {80},
  pages = {2773--2832},
  year = {2014},
  url = {https://jmlr.org/papers/v15/anandkumar14b.html}
}

@article{BaydinEtAl18,
  author = {Atilim Gunes Baydin and Barak A. Pearlmutter and Alexey Andreyevich Radul and Jeffrey Mark Siskind},
  title = {Automatic Differentiation in Machine Learning: A Survey},
  journal = {Journal of Machine Learning Research},
  volume = {18},
  number = {153},
  pages = {1--43},
  year = {2018},
  url = {https://jmlr.org/papers/v18/17-468.html}
}

@article{CK22,
  author = {Alice Cortinovis and Daniel Kressner},
  title = {On Randomized Trace Estimates for Indefinite Matrices with an Application to Determinants},
  journal = {Foundations of Computational Mathematics},
  volume = {22},
  number = {3},
  pages = {875--903},
  year = {2022},
  doi = {10.1007/s10208-021-09525-9}
}

@article{LQT20,
  author = {Ziyan Luo and Liqun Qi and Philippe L. Toint},
  title = {Tensor {B}ernstein Concentration Inequalities with an Application to Sample Estimators for High-Order Moments},
  journal = {Frontiers of Mathematics in China},
  volume = {15},
  number = {2},
  pages = {367--384},
  year = {2020},
  doi = {10.1007/s11464-020-0830-4}
}

@article{CheWeiYan2026,
  author = {Maolin Che and Yimin Wei and Hong Yan},
  title = {Efficient Randomized Algorithms for Computing an Approximation of the Tensor Train Decomposition},
  journal = {Journal of Scientific Computing},
  volume = {107},
  number = {1},
  pages = {Article 2},
  year = {2026},
  doi = {10.1007/s10915-026-03208-9}
}

@article{SaibabaMiedlar2025,
  author = {Arvind K. Saibaba and Agnieszka Mi\k{e}dlar},
  title = {Randomized Low-Rank Approximations beyond Gaussian Random Matrices},
  journal = {SIAM Journal on Mathematics of Data Science},
  volume = {7},
  number = {1},
  pages = {136--162},
  year = {2025},
  doi = {10.1137/23M1593255}
}

@article{ChangCuiHanWang2026,
  author = {Jingya Chang and Xiaofei Cui and Deren Han and Zhenyou Wang},
  title = {Hyperspectral Super-Resolution Via Tensor {CP} Decomposition with Total Variation Regularization},
  journal = {Journal of Optimization Theory and Applications},
  volume = {209},
  number = {3},
  pages = {Article 80},
  year = {2026},
  doi = {10.1007/s10957-026-03020-3}
}

@article{ChenQi2026,
  author = {Yannan Chen and Liqun Qi},
  title = {A Direct Method for Computing the Complex-Valued Triple Decomposition of Third-Order Tensors},
  journal = {SIAM Journal on Scientific Computing},
  volume = {48},
  number = {2},
  pages = {A425--A447},
  year = {2026},
  doi = {10.1137/24M1660164}
}

\end{document}